\newcommand{\ep}{\underline{\epsilon}}
\newcommand{\onen}{{\mathbf 1}_{n}}
\newcommand{\onenn}[1]{{\mathbf 1}_{#1}}
\newcommand\rE{{\sf{E}}}
\newcommand\rF{{\sf{F}}}
\newcommand{\END}{{\rm END}}
\newcommand{\Gr}{\cat{Flag}_{N}}
\newcommand{\Grn}[1]{\cat{Flag}_{#1}}
\newcommand{\U}{\dot{{\bf U}}}
\newcommand{\Ucat}{\cal{U}}
\newcommand{\UcatD}{\dot{\cal{U}}}
\newcommand{\B}{\dot{\mathbb{B}}}
\newcommand{\UA}{{_{\cal{A}}\dot{{\bf U}}}}
\newcommand{\und}[1]{\underline{#1}}
\newcommand{\qbin}[2]{
\left[
 \begin{array}{c}
 #1 \\
 #2 \\
 \end{array}
 \right]_{q^2}
}
\newcommand{\qbins}[2]{
\left[
 \begin{array}{c}
 \scs #1 \\
 \scs #2 \\
 \end{array}
 \right]
}
\newcommand{\refequal}[1]{\xy {\ar@{=}^{#1}
(-1,0)*{};(1,0)*{}};
\endxy}
\newcommand{\cat}[1]{\ensuremath{\mbox{\bfseries {\upshape {#1}}}}}
\newcommand{\To}{\Rightarrow}
\newcommand{\Hom}{{\rm Hom}}
\newcommand{\HOM}{{\rm HOM}}
\renewcommand{\to}{\rightarrow}
\newcommand{\maps}{\colon}
\newcommand{\End}{{\rm End}}
\newcommand{\scs}{\scriptstyle}
\def\Id{\mathrm{Id}}
\def\mf{\mathfrak}
\def\shuffle{\,\raise 1pt\hbox{$\scriptscriptstyle\cup{\mskip
               -4mu}\cup$}\,}
\theoremstyle{definition}
\newtheorem{thm}{Theorem}[section]
\newtheorem{cor}[thm]{Corollary}
\newtheorem{rem}[thm]{Remark}
\newtheorem{prop}[thm]{Proposition}
\newtheorem{defn}[thm]{Definition}
\numberwithin{equation}{section}
\def\emph#1{{\sl #1\/}}
\let\hat=\widehat
\let\phi=\varphi
\let\theta=\vartheta
\let\epsilon=\varepsilon
\def\C{{\mathbbm C}}
\def\N{{\mathbbm N}}
\def\Z{{\mathbbm Z}}
\def\Q{{\mathbbm Q}}
\def\cal#1{\mathcal{#1}}%
\def\1{\mathbbm{1}}%
\def\nn{\notag}
\def\la{\langle}
\def\ra{\rangle}
\newcommand{\Uup}{
    \xy {\ar (0,-3)*{};(0,3)*{} };(1.5,0)*{};(-1.5,0)*{};\endxy}
\newcommand{\Udown}{
    \xy {\ar (0,3)*{};(0,-3)*{} };(1.5,0)*{};(-1.5,0)*{};\endxy}
\newcommand{\Uupdot}{
   \xy {\ar (0,-3)*{};(0,3)*{} };(0,0)*{\bullet};(1.5,0)*{};(-1.5,0)*{};\endxy}
\newcommand{\Udowndot}{
   \xy {\ar (0,3)*{};(0,-3)*{} };(0,0)*{\bullet};(1.5,0)*{};(-1.5,0)*{};\endxy}
\newcommand{\Ucupr}{\;\;
    \vcenter{\xy (-2,3)*{}; (2,3)*{} **\crv{(-2,-1) & (2,-1)}?(1)*\dir{>};
            (2,-3)*{};(-2,3)*{}; \endxy} \;\; }
\newcommand{\Ucapl}{\;\;
    \vcenter{\xy (2,-3)*{}; (-2,-3)*{} **\crv{(2,1) & (-2,1)}?(1)*\dir{>};
            (2,-3)*{};(-2,3)*{}; \endxy} \;\; }
\newcommand{\ccbub}[1]{
\xybox{%
 (-6,0)*{};
  (6,0)*{};
  (-4,0)*{}="t1";
  (4,0)*{}="t2";
  "t2";"t1" **\crv{(4,6) & (-4,6)};
   ?(1)*\dir{>};
  "t2";"t1" **\crv{(4,-6) & (-4,-6)};
   ?(.3)*\dir{}+(0,0)*{\bullet}+(0,-3)*{\scs {#1}};
}}
\newcommand{\cbub}[1]{
\xybox{%
 (-6,0)*{};
  (6,0)*{};
  (-4,0)*{}="t1";
  (4,0)*{}="t2";
  "t2";"t1" **\crv{(4,6) & (-4,6)};
    ?(.95)*\dir{<};
  "t2";"t1" **\crv{(4,-6) & (-4,-6)};
   ?(.3)*\dir{}+(0,0)*{\bullet}+(0,-3)*{\scs {#1}};
}}
\newcommand{\bbpef}{\xybox{%
  (-6,0)*{};
  (6,0)*{};
  (-4,0)*{}="t1";
  (4,0)*{}="t2";
  "t1";"t2" **\crv{(-4,-6) & (4,-6)}; ?(.15)*\dir{>} ?(.9)*\dir{>};
}}
\newcommand{\bbpfe}{\xybox{%
  (-6,0)*{};
  (6,0)*{};
  (-4,0)*{}="t1";
  (4,0)*{}="t2";
  "t2";"t1" **\crv{(4,-6) & (-4,-6)}; ?(.15)*\dir{>} ?(.9)*\dir{>};
}}
\newcommand{\bbcfe}[1]{\xybox{%
  (-6,0)*{};
  (6,0)*{};
  (-4,0)*{}="t1";
  (4,0)*{}="t2";
  "t1";"t2" **\crv{(-4,6) & (4,6)}; ?(.15)*\dir{>} ?(.9)*\dir{>}
  ?(.5)*\dir{}+(0,2)*{\scriptstyle{#1}};
}}
\newcommand{\bbcef}[1]{\xybox{%
  (-6,0)*{};
  (6,0)*{};
  (-4,0)*{}="t1";
  (4,0)*{}="t2";
  "t2";"t1" **\crv{(4,6) & (-4,6)}; ?(.15)*\dir{>}
  ?(.9)*\dir{>} ?(.5)*\dir{}+(0,2)*{\scriptstyle{#1}};
}}
\newcommand{\lowrru}[1]{\xybox{%
  (-8,0)*{};
  (8,0)*{};
  (-6,-18)*{};(6,-9)*{} **\crv{(-6,-13) & (6,-15)} ?(1)*\dir{>};
  (6,-9)*{};(6,0)*{}  **\dir{-} ?(.3)*\dir{ }+(2,0)*{\scs {\bf j}};
}}
\newcommand{\lowllu}[1]{\xybox{%
  (-8,0)*{};
  (8,0)*{};
  (6,-18)*{};(-6,-9)*{} **\crv{(6,-13) & (-6,-15)} ?(1)*\dir{>};
  (-6,-9)*{};(-6,0)*{}  **\dir{-} ?(.3)*\dir{ }+(-2,0)*{\scs {\bf j}};
}}
\newcommand{\bbdl}[1]{\xybox{%
  (2,0);(0,-8) **\crv{(2,-2)&(0,-6)}; ?(.5)*\dir{>}
}}
\newcommand{\bbdlu}[1]{\xybox{%
  (2,0);(0,-8) **\crv{(2,-2)&(0,-6)}; ?(.5)*\dir{<}
}}
\newcommand{\bbdr}[1]{\xybox{%
  (-2,0);(0,-8) **\crv{(-2,-2)&(0,-6)}; ?(.5)*\dir{>}
}}
\newcommand{\bbdru}[1]{\xybox{%
  (-2,0);(0,-8) **\crv{(-2,-2)&(0,-6)}; ?(.5)*\dir{<}
}}
\begin{document}
%

\title{Categorified quantum $\mathfrak{sl}_2$ is an inverse limit of flag 2-categories}

\author{Anna Beliakova}
\address{Universit\"at Z\"urich, Winterthurerstr. 190
CH-8057 Z\"urich, Switzerland}
\email{anna@math.uzh.ch}

\author{Aaron D. Lauda}
\address{Department of Mathematics, University of Southern California, Los Angeles, CA 90089, USA}
\email{lauda@usc.edu}

\begin{abstract}
We prove that categorified quantum $\mathfrak{sl}_2$ is an inverse limit of Flag 2-categories defined using cohomology rings of iterated flag varieties.   This inverse limit is an instance of a 2-limit in a bicategory giving rise to a universal property that characterizes the categorification of quantum $\mathfrak{sl}_2$ uniquely up to equivalence.  As an application we characterize all bimodule homomorphisms in the Flag 2-category and prove that the categorified quantum Casimir of $\mathfrak{sl}_2$ acts appropriately on these 2-representations.
\end{abstract}

\maketitle


\newcommand{\uk}{\underline{k}}

In \cite{Lau1} the second author  categorified   quantum ${\mathfrak {sl}}_2$.
For this purpose, a 2-category $\dot\Ucat$ was introduced as an idempotent completion
of an additive 2-category $\Ucat$. The objects $n\in \Z$ of $\Ucat$ are parameterized
by the integral weight lattice of ${\mathfrak {sl}}_2$. The 1-morphisms
are generated by $\cal{E}\onen$ and $\cal{F}\onen$, which can be thought of as
categorifications of the generators $E$ and $F$ of quantum ${\mathfrak {sl}}_2$.
The 2-morphisms are $\Bbbk$-linear combinations of certain planar
diagrams modulo local relations. The Grothendieck group of  $\dot\Ucat$ coincides with
the integral idempotented version $\U$ of the quantum enveloping algebra of
${\mathfrak {sl}}_2 $ with generic quantum parameter $q$ \cite{BLM}.
In joint work with Khovanov, this approach was extended to all quantum groups~\cite{KL,KL2,KL3}. Related categorifications were developed by Chuang and Rouquier~\cite{CR,Rou2}. The precise relationship between these approaches is explained in \cite{CLau}.

The main categorification result in \cite{Lau1}  relies on the existence and faithfullness (for $N$ large) of  the  2-functor $\Gamma_N$ from $\UcatD$ to a 2-category $\Gr$. The  objects of $\Gr$ are cohomology rings of Grassmannians in $\C^N$, the morphisms are generated by bimodules arising from cohomology rings of iterated flag varieties, and the 2-morphisms are bimodule homomorphisms.  Variants of this 2-category have been studied previously in connection with higher representation theory~\cite{CR,FKS}.

In this article we work with a refined version of the 2-category $\Gr$.  The original 2-category is described as an idempotent completion of a 2-category built from cohomology rings of one-step flag varieties.  Here we add 1-morphisms in the form of bimodules given by cohomology rings of additional partial flag varieties. With these additional 1-morphisms we give a direct proof of idempotent completeness of $\Gr$ (Theorem~\ref{thm_Karoubi}).  This careful analysis allows us to prove that the 2-functors $\Gamma_N$ are also full in an appropriate sense (Theorem~\ref{thm_full}).  This implies that all bimodule homomorphisms between cohomology rings of iterated flag varieties and their tensor products are described by the 2-category $\UcatD$.  Any such bimodule homomorphism is a  composition of images of
 the generating 2-morphisms modulo the relations  in the 2-category $\UcatD$. Hence, the study of bimodule homomorphisms in $\Gr$
is encoded into the diagrammatic calculus of $\UcatD$ that has been developed in \cite{KLMS}. The  main technique used in the proof is manipulation of the Hom spaces using the existence of biadjoints as in \cite{CLau}.  This allows us to compute explicitly the images of the Hom-spaces in $\UcatD$ under $\Gamma_N$.

The 2-morphisms in $\UcatD$ carry a natural grading making the space of 2-morphisms between two 1-morphisms into a graded $\Bbbk$-vector space.  Let us denote by $\Hom^a(x,y)$ the space of all degree $a$ 2-morphisms in $\UcatD$ from $x$ to $y$. The results described above imply that the 2-functor $\Gamma_N:\UcatD \to \Gr$ is locally full and eventually faithful
in the sense that given any two 1-morphisms $x$ and $y$ of $\UcatD$ and  $d\in \N$, there exists
a positive integer $M$, such that the map
 \[
 \Gamma_N \maps \Hom^d_{\UcatD} (x,y) \to \Hom_{\Gr \nn}^d (\Gamma_N(x), \Gamma_N(y))
  \]
  is an isomorphisms for any $N>M$.

This observation suggests it should be possible to realize the  2-category $\UcatD$ as an inverse limit of the 2-categories $\Gr$.  Our main result makes this precise by proving that the 2-category $\UcatD$ is a 2-limit in an appropriate bicategory (Theorem~\ref{thm_equiv}).  This characterization of $\UcatD$ gives rise to a universal property describing $\UcatD$ uniquely up to equivalence.  It should be straightforward to generalize this result to the 2-category $\Ucat(\mf{sl}_n)$ and the related 2-functors $\Gamma_N$ defined in \cite{KL3}. Part of our interest in such a description stems from conversations with Ben Cooper about his forthcoming preprint.  This result can be viewed as a categorical analog of the classical result of Beilinson, Lusztig, and MacPherson~\cite{BLM} realizing the idempotent form of the quantum group as an inverse limit.

The integral idempotent version of the Casimir element of quantum $\mathfrak{sl}_2$ has the form
\begin{eqnarray}
 \dot{C} &=& \prod_{n\in\Z}C1_n, \nn \\
  C1_n =1_nC &:=& (-q^2+2-q^{-2})FE1_n-(q^{n+1}+q^{-1-n})1_n. \nn
\end{eqnarray}
Previous work \cite{BKL} of the authors together with Khovanov defines a complex categorifying
this form of the Casimir element.  This complex exists in the 2-category $\rm Kom(\Ucat)$ with the same objects as $\Ucat$, but whose morphisms are chain complexes of morphisms from $\Ucat$; the 2-morphisms in $\rm Kom (\Ucat)$ are chain maps built from the diagrammatically defined 2-morphisms in $\Ucat$. Just as the center of quantum ${\mathfrak{ sl}}_2$
as an algebra is generated by the Casimir operator, in \cite{BKL} it is shown that the Casimir complex lies in the (Drinfeld) center of the 2-category $\rm Kom(\Ucat)$.  In particular, this complex commutes with other complexes up to chain homotopy equivalence and possesses desirable naturality properties with respect to 2-morphisms in $\Ucat$.  The Casimir complex reduces to the integral idempotent form of the Casimir operator when passing to the Grothendieck ring so that the Casimir complex can be viewed as a categorification of the Casimir element.  We expect the Casimir complex to play an important role in categorified representation theory.

In this article, we make an additional connection between the categorified Casimir element and its decategorification.  The Casimir element acts by a multiple of the identity on any highest-weight representation.   Here we compute the action of the Casimir complex
on $\Gr\nn$ and show that
${\cal C}1_n$ acts on a categorified $(N+1)$ dimensional representation
as it should: namely by producing two copies of the identity on any weight space
with  degree shifts  $(N+1)$ and $(-N-1)$, respectively, see Theorem \ref{thm_cas}. However, in
$\rm Kom(\Ucat)$, the complexes ${\cal C}1_n$ and $1_n\{N+1\}[-1]\oplus 1_n\{-N-1\}[1]$ are not  homotopy equivalent.

We expect that the computation of the action of the Casimir complex on $\Gr$, together with the description of $\UcatD$ as an inverse limit,  will help to determine whether the  center of $\rm Kom (\Ucat)$ has a structure of a braided monoidal category with non--trivial braiding.
Our motivation for studying the center of the 2-category ${\rm Kom}(\UcatD)$ stems from the relationship between the center of quantum $\mathfrak{sl}_2$ and the universal ${\mathfrak{ sl}}_2$ knot invariant~\cite{Hab,Law2,Law}.
\medskip

\noindent {\bf Acknowledgments:}
The authors are grateful to Eugenia Cheng for helpful discussions on limits in higher categories. Both authors would also like to thank Ben Cooper for interesting conversations related to this paper.
The first author was partially supported by the Swiss National
Science Foundation.
The second author was partially supported by the NSF grant DMS-0855713 and the Alfred P. Sloan foundation.


%
\section{The 2-category $\UcatD$}
%

By a graded category we will mean a category equipped with an auto-equivalence $\la 1 \ra$. We denote by $\la l \ra$ the auto-equivalence obtained by applying $\la 1 \ra$ $l$ times. If $x,y$ are two objects then $\Hom^s(x,y)$ will be short hand for the space  $\Hom(x,y \la s \ra)$ of degree $s$ morphisms.  In particular, we write
\[
 \Hom(x \la s \ra, y \la t \ra ) =  \Hom(x, y \la t-s \ra ) =\Hom(x\la s-t\ra, y )=\Hom^{t-s}(x,y).
\]
Define graded $\Bbbk$-vector spaces $\HOM(x,y) := \bigoplus_{s \in \Z} \Hom^s(x,y)$ and $\END(x)=\HOM(x,x)$.

A graded additive $\Bbbk$-linear 2-category is a category enriched over graded additive $\Bbbk$-linear categories, that is a 2-category $\cal{K}$ such that the Hom categories $\Hom_{\cal{K}}(x,y)$ between objects $x$ and $y$ are graded additive $\Bbbk$-linear categories and the composition maps $\Hom_{\cal{K}}(x,y) \times \Hom_{\cal{K}}(y,z) \to \Hom_{\cal{K}}(x,z)$ form a graded additive $\Bbbk$-linear functor. Given a 1-morphism $x$ in an additive 2-category $\cal{K}$ and a Laurent polynomial $f=\sum f_a q^a\in \N[q,q^{-1}]$ we write $\oplus_f x$ or $x^{\oplus f}$ for the direct sum over $a\in \N$, of $f_a$ copies of $x\la a\ra$.
Here we will use the terminology 2-category, 2-functor, and 2-natural transformations in place of bicategory, pseudo functor, and pseudo natural transformation, see \cite[Section 7.7]{Bor}.  A 2-category is called strict when 1-morphisms compose strictly associatively, and a 2-functor is called strict when it preserves composition of 1-morphisms on the nose. Some of these definitions are recalled in section~\ref{sec_Lack}.

We briefly recall the (strict) 2-category $\Ucat$ from \cite{Lau1}.  The definition below follows most closely \cite{CLau}.  For more on the categorification of quantum $\mathfrak{sl}_2$ see \cite{Lau2, Lau3,KL3,Web}.

\begin{defn} \label{defU_cat}
The strict 2-category $\Ucat=\Ucat(\mf{sl}_2)$ is the graded additive $\Bbbk$-linear 2-category consisting of
\begin{itemize}
\item objects $n$ for $n \in \Z$,
\item 1-morphisms are formal direct sums of compositions of
$$\onen, \quad \onenn{n+2} \cal{E} = \onenn{n+2} \cal{E}\onen = \cal{E} \onen \quad \text{ and }\quad \onenn{n-2} \cal{F} = \onenn{n-2} \cal{F}\onen = \cal{F}\onen$$
for $n \in \Z$, together with their grading shifts $x\la t\ra$ for all 1-morphisms $x$ and $t \in \Z$, and
\item 2-morphisms are $\Bbbk$-vector spaces spanned by (vertical and horizontal) compositions of tangle-like diagrams illustrated below.
\begin{align}
  \xy 0;/r.17pc/:
 (0,7);(0,-7); **\dir{-} ?(.75)*\dir{>};
 (0,0)*{\bullet};
 (7,3)*{ \scs n};
 (-9,3)*{\scs n+2};
 (-10,0)*{};(10,0)*{};
 \endxy &\maps \cal{E}\onen \to \cal{E}\onen\la 2 \ra  & \quad
 &
    \xy 0;/r.17pc/:
 (0,7);(0,-7); **\dir{-} ?(.75)*\dir{<};
 (0,0)*{\bullet};
 (7,3)*{ \scs n};
 (-9,3)*{\scs  n-2};
 (-10,0)*{};(10,0)*{};
 \endxy\maps \cal{F}\onen \to \cal{F}\onen\la 2 \ra  \nn \\
   & & & \nn \\
   \xy 0;/r.17pc/:
  (0,0)*{\xybox{
    (-4,-4)*{};(4,4)*{} **\crv{(-4,-1) & (4,1)}?(1)*\dir{>} ;
    (4,-4)*{};(-4,4)*{} **\crv{(4,-1) & (-4,1)}?(1)*\dir{>};
     (9,1)*{\scs  n};
     (-10,0)*{};(10,0)*{};
     }};
  \endxy \;\;&\maps \cal{E}\cal{E}\onen  \to \cal{E}\cal{E}\onen\la -2 \ra  &
  &
   \xy 0;/r.17pc/:
  (0,0)*{\xybox{
    (-4,4)*{};(4,-4)*{} **\crv{(-4,1) & (4,-1)}?(1)*\dir{>} ;
    (4,4)*{};(-4,-4)*{} **\crv{(4,1) & (-4,-1)}?(1)*\dir{>};
     (9,1)*{\scs  n};
     (-10,0)*{};(10,0)*{};
     }};
  \endxy\;\; \maps \cal{F}\cal{F}\onen  \to \cal{F}\cal{F}\onen\la -2 \ra  \nn \\
  & & & \nn \\
     \xy 0;/r.17pc/:
    (0,-3)*{\bbpef{}};
    (8,-5)*{\scs  n};
    (-10,0)*{};(10,0)*{};
    \endxy &\maps \onen  \to \cal{F}\cal{E}\onen\la 1+n \ra   &
    &
   \xy 0;/r.17pc/:
    (0,-3)*{\bbpfe{}};
    (8,-5)*{\scs n};
    (-10,0)*{};(10,0)*{};
    \endxy \maps \onen  \to\cal{E}\cal{F}\onen\la 1-n\ra  \nn \\
      & & & \nn \\
  \xy 0;/r.17pc/:
    (0,0)*{\bbcef{}};
    (8,4)*{\scs  n};
    (-10,0)*{};(10,0)*{};
    \endxy & \maps \cal{F}\cal{E}\onen \to\onen\la 1+n \ra  &
    &
 \xy 0;/r.17pc/:
    (0,0)*{\bbcfe{}};
    (8,4)*{\scs  n};
    (-10,0)*{};(10,0)*{};
    \endxy \maps\cal{E}\cal{F}\onen  \to\onen\la 1-n \ra \nn
\end{align}
The degree of the 2-morphisms can be read from the shift on the right-hand side.
\end{itemize}
\end{defn}
Diagrams are read from right to left and bottom to top.  The identity 2-morphism of the 1-morphism $\cal{E} \onen$ is represented by an upward oriented line (likewise, the identity 2-morphism of $\cal{F} \onen$ is represented by a downward oriented line).

The 2-morphisms satisfy the following relations.
\begin{enumerate}
\item \label{item_cycbiadjoint} The 1-morphisms $\cal{E} \onen$ and $\cal{F} \onen$ are biadjoint (up to a specified degree shift). Moreover, the 2-morphisms are cyclic with respect to this biadjoint structure.

\item The $\cal{E}$'s carry an action of the nilHecke algebra. Using the adjoint structure this induces an action of the nilHecke algebra on the $\cal{F}$'s.

\item \label{item_positivity} Dotted bubbles of negative degree are zero, so that
for all $m \in \Z_{\geq 0}$ one has
\begin{equation} \label{eq_positivity_bubbles}
\xy 0;/r.18pc/:
 (-12,0)*{\cbub{m}{}};
 (-8,8)*{n};
 \endxy
  = 0
 \qquad  \text{if $m< n-1$,} \qquad \xy 0;/r.18pc/: (-12,0)*{\ccbub{m}{}};
 (-8,8)*{n};
 \endxy = 0\quad
  \text{if $m< -n -1$.}
\end{equation}
Dotted bubbles of degree zero are equal to the identity 2-morphisms:
\[
\xy 0;/r.18pc/:
 (0,0)*{\cbub{n-1}};
  (4,8)*{n};
 \endxy
  =  \Id_{\onen} \quad \text{for $n \geq 1$,}
  \qquad \quad
  \xy 0;/r.18pc/:
 (0,0)*{\ccbub{-n-1}};
  (4,8)*{n};
 \endxy  =  \Id_{\onen} \quad \text{for $n \leq -1$.}\]

\item \label{item_highersl2}
There are additional relations requiring that the $\mf{sl}_2$ relations lift to isomorphisms in $\Ucat$.  In particular, if $n \geq 0$ the 2-morphism
\begin{equation}\label{eq_phi-minus}
\xy 0;/r.15pc/:
    (-4,-4)*{};(4,4)*{} **\crv{(-4,-1) & (4,1)}?(0)*\dir{<} ;
    (4,-4)*{};(-4,4)*{} **\crv{(4,-1) & (-4,1)}?(1)*\dir{>};
  \endxy \;\; \bigoplus_{k=0}^{n-1}
\vcenter{\xy 0;/r.15pc/:
 (-4,2)*{}="t1"; (4,2)*{}="t2";
 "t2";"t1" **\crv{(4,-5) & (-4,-5)}; ?(1)*\dir{>}
 ?(.8)*\dir{}+(0,-.1)*{\bullet}+(-3,-1)*{\scs k};;
 \endxy}: \cal{F} \cal{E} \onen \bigoplus_{k=0}^{n-1} \onen \la n-1-2k \ra \rightarrow \cal{E} \cal{F} \onen
\end{equation}
is invertible in $\Ucat$, and if $n \le 0$ then the 2-morphism
\begin{equation}\label{eq_phi-plus}
\xy 0;/r.15pc/:
    (-4,-4)*{};(4,4)*{} **\crv{(-4,-1) & (4,1)}?(1)*\dir{>} ;
    (4,-4)*{};(-4,4)*{} **\crv{(4,-1) & (-4,1)}?(0)*\dir{<};
  \endxy \;\; \bigoplus_{k=0}^{-n-1}
\vcenter{\xy 0;/r.15pc/:
 (4,2)*{}="t1"; (-4,2)*{}="t2";
 "t2";"t1" **\crv{(-4,-5) & (4,-5)}; ?(1)*\dir{>}
 ?(.8)*\dir{}+(0,-.1)*{\bullet}+(3,-1)*{\scs k};;
 \endxy}:  \cal{E} \cal{F} \onen \bigoplus_{k=0}^{-n-1} \onen \la -n-1-2k \ra \rightarrow \cal{F} \cal{E}  \onen
\end{equation}
is invertible in $\Ucat$.
\end{enumerate}

In \cite{CLau} it is shown that any choice of inverses for the 2-morphisms giving the $\mf{sl}_2$-relations that are compatible with the structure above produces a 2-category isomorphic to the 2-category $\Ucat$ from \cite{Lau1}. Therefore, we omit the specific relations here. For more details see \cite[Section 3]{Lau3} and \cite{CLau}.

The 2-category $\UcatD$ is the Karoubi completion (or idempotent completion) of the 2-category $\Ucat$.  In $\UcatD$ there are 1-morphisms $\cal{E}^{(a)}\onen$ and $\cal{F}^{(b)}\onen$ satisfying relations $\cal{E}^a\onen = \oplus_{[a]!} \cal{E}^{(a)}\onen$ and $\cal{F}^{(b)}\onen= \oplus_{[b]!}\cal{F}^{b}\onen$, where $[n]:=(q^n-q^{-n})/(q-q^{-1})$ and $[n]:=[n][n-1]\dots[1]$.

It is convenient to describe 1-morphisms in $\Ucat$ using ordered sequences
$\ep = (\epsilon_1,\epsilon_2, \dots, \epsilon_m)$ of elements $\epsilon_1, \dots, \epsilon_m \in \{ +,-\}$ called signed sequences.  Write $\cal{E}_{\ep} \onen := \cal{E}_{\epsilon_1} \dots \cal{E}_{\epsilon_m}$ with $\cal{E}_{+} := \cal{E}$ and $\cal{E}_{-} := \cal{F}$.  Likewise, 1-morphisms in $\UcatD$ can be described by
divided powers signed sequences
\begin{equation}
  (\ep) = (\epsilon_1^{(a_1)},\epsilon_2^{(a_2)},\dots,\epsilon_m^{(a_m)})
\end{equation}
where $\epsilon$'s are as before and $a_1, \dots, a_m \in \{
1,2,\dots\}$, so that $\cal{E}_{(\ep)}\onen = \cal{E}_{\epsilon_1}^{(a_1)}
\dots \cal{E}_{\epsilon_m}^{(a_m)}\onen$.
Set $|\ep|= \sum_{k=1}^m \epsilon_k a_k  \in \Z$.

Another convenient notation indexes the 1-morphisms in $\UcatD$ that lift elements of Lusztig's canonical basis $\B$ of $\U$
\begin{enumerate}[(i)]
     \item $E^{(a)}F^{(b)}1_{n} \quad $ for $a$,$b\in \Z_{\geq 0}$,
     $n\in\Z$, $n\leq b-a$,
     \item $F^{(b)}E^{(a)}1_{n} \quad$ for $a$,$b\in\Z_{\geq 0}$, $n\in\Z$,
     $n\geq
     b-a$,
\end{enumerate}
where $E^{(a)}F^{(b)}1_{b-a}=F^{(b)}E^{(a)}1_{b-a}$.   To each $x \in \B$ we associate a 1-morphism in $\UcatD$:
\begin{equation} \label{eq_basis}
  x \mapsto \cal{E}(x) := \left\{
\begin{array}{cl}
  \cal{E}^{(a)}\cal{F}^{(b)}\onen & \text{if $x=E^{(a)}F^{(b)}1_n$,} \nn \\
  \cal{F}^{(b)}\cal{E}^{(a)}\onen & \text{if $x=F^{(b)}E^{(a)}1_n$.} \nn
\end{array}
  \right.
\end{equation}

Let $\UA$ denote the $\Z[q,q^{-1}]$ form of the idempotented algebra $\U$.  The defining relations for the algebra $\UA$ all lift to explicit isomorphisms in $\UcatD$ (see \cite[Theorem 5.1, Theorem 5.9]{KLMS}) giving rise to an isomorphism of $\Z[q,q^{-1}]$-modules
\begin{eqnarray} \label{def_gamma}
\gamma\maps  \UA & \longrightarrow& K_0(\UcatD) \\
   x & \mapsto & [\cal{E}(x)], \nn
\end{eqnarray}
where $\UcatD$ is defined over a commutative ring $\Bbbk$.  In this article $\Bbbk$ can be any field in which $2$ is invertible. We set $\Bbbk=\Q$ for convenience.

%
\section{The Flag 2-category}
%

%
\subsection{Cohomology of partial flag varieties} \label{eq_cohomology}
%
Fix a positive integer $N$.  For $0 \leq k \leq
N$ let $G_k^N$ denote the variety of complex k-planes in $\C^N$. Given a sequence of integers $\uk = (k_1, k_2, \dots, k_m)$ with $0 \leq k_i \leq N$ there is a partial flag variety $G_{\uk}^N =G_{k_1,\ldots,k_m}^N$ consisting of sequences $(W_1,\ldots,W_m)$ of linear subspaces of $\C^N$ such that the dimension of $W_i$ is $k_i$ and $W_i \subset W_{i+1}$
if $k_i \leq k_{i+1}$ and $W_i \supset W_{i+1}$ if $k_i \geq k_{i+1}$.  In particular, $G_{\uk}^N$ is a Grassmannian when $m=1$.

Let $H_{\uk;N}$ denote the cohomology ring of $G_{\uk}^N$. The forgetful maps
\[
 \xymatrix{G_{k_1}^N & G_{\uk}^N \ar[l]_{p_1} \ar[r]^{p_2} & G_{k_m}^N}
\]
induce maps of cohomology rings
\begin{equation} \label{eq_cohom_inclusions}
 \xymatrix{ H_{k_1;N} \ar[r]^-{p_1^*} & H_{\uk;N}   & H_{k_m;N} \ar[l]_-{p_2^*}}
\end{equation}
which make the cohomology ring $H_{\uk;N}$ into a
graded $(H_{k_1;N},H_{k_m;N})$-bimodule.

When the sequence $\uk$ is an increasing sequence, so that $0 \leq k_1 < k_2 < \dots < k_m \leq N$, then there is an isomorphism of graded bimodules
\begin{align} \label{eq_Hk-tensor}
  H_{\uk;N} \cong
  H_{k_1,k_2;N} \otimes_{H_{k_2;N}} H_{k_2,k_3;N} \otimes_{H_{k_3;N}}
   \dots \otimes_{H_{k_{m-1};N} H_{k_{m-1},k_m;N}}.
\end{align}
There is a similar isomorphism when $\uk$ is a decreasing sequence.

It is possible to give an explicit presentation of the cohomology ring $H_{\uk;N}$ as a quotient of a graded polynomial ring by a homogeneous ideal.   Useful references for this material are \cite[Chapter 3]{Hiller} and \cite[Chapter 10]{Fulton}, see also \cite[Section 6]{Lau1}.  We recall some of these presentations to fix our notation.  In what follows we introduce a new parameter $n=2k-N$ called the weight.
\begin{itemize}
  \item The cohomology of the Grassmannian is given by
  \begin{equation} \nn
H_{k;N}:=
\Q[x_{1,n}, x_{2,n},\dots, x_{k,n};
y_{1,{n}}, \dots, y_{N-k, n}]/ I_{k;N}
\end{equation}
with $I_{k;N}$ the homogeneous ideal generated by equating the homogeneous terms in $t$ in the equation
\begin{equation} \nn
  (1+x_{1,n}t+\dots+x_{k,n}t^k)(1+y_{1,n}t+
\dots+y_{N-k,n} t^{N-k})=1.
\end{equation}

\item The cohomology of the $a$th iterated 1-step flag variety $H_{\uk;N}$ with $\uk=(k,k+1,k+2,\dots, k+a)$ is given by
\begin{equation} \nn
H_{\uk;N}:=
\Q[x_{1,n}, x_{2,n},\dots, x_{k,n};\xi_1,\dots, \xi_a;
y_{1,{n+2a}}, \dots, y_{N-k-a, n+2a}]/ I_{\uk;N}
\end{equation}
with $I_{\uk;N}$ the homogeneous ideal generated by equating the homogeneous terms in $t$ in the equation
\begin{equation} \nn
  (1+x_{1,n}t+\dots+x_{k,n}t^k)(1+\xi_1 t)(1+\xi_2 t)\dots (1+\xi_a t)(1+y_{1,n+2a}t+
\dots+y_{N-k-a,n+2a} t^{N-k-a})=1.
\end{equation}

\item The cohomology of $a$-step flag variety corresponding to the sequence $\uk=(k,k+a)$ is given by
\begin{equation} \nn
H_{k,k+a;N}:=
\Q[x_{1,n}, x_{2,n},\dots, x_{k,n};
\varepsilon_1,\dots, \varepsilon_a;
y_{1,{n+2a}}, \dots, y_{N-k-a, n+2a}]/ I_{k,k+a;N}
\end{equation}
with $I_{k,k+a;N}$ the homogeneous ideal generated by equating the homogeneous terms in $t$ in the equation
\begin{equation} \nn
  (1+x_{1,n}t+\dots+x_{k,n}t^k)(1+\varepsilon_1 t+\dots +\varepsilon_a t^a)(1+y_{1,n+2a}t+
\dots+y_{N-k-a,n+2a} t^{N-k-a})=1.
\end{equation}
\end{itemize}

Our choice of subscripts on the variables arise from the inclusions \eqref{eq_cohom_inclusions} of the cohomology rings of Grassmannians into the cohomology of the partial flag varieties.  With this notation the bimodule structure is apparent.  These rings are graded with $\deg(x_{j,n}) =2 j=\deg(y_{j,n+2a})$, $\deg(\xi_j)=2$, and $\deg(\varepsilon_j)=2j$.

For a fixed value of $N$ and $n=2k-N$ we set $x_{j,n}=0$ for $j<0$ or $j>k$ and set $y_{\ell,n}=0$ for $\ell<0$ and $\ell>N-k$.  The variable $x_{0,n}$ and $y_{0,n}$ are always set equal to 1.

%
%

Note that the graded dimension of the cohomology ring of the iterated flag variety  is given by the following formula.
\begin{equation}
{\rm rk}_q H_{k,k+1,\dots, k+a}=\qbin{N}{k} \qbin{N-k}{a}=\qbin{N}{k+a} \qbin{k+a}{a} \label{grdim1}
\end{equation}
where $\qbin{N}{k}:=\frac{\{N\}^!_{q^2}}{\{k\}^!_{q^2}\{N-k\}^!_{q^2}}$
and $\{a\}_{q^2}:=1+q^2+\dots+q^{2(a-1)}$.

%
\subsection{The definition of the 2-category $\Gr$}
%

We now define a sub-2-category $\Gr$ of the 2-category $\cat{Bim}$ consisting of graded rings, graded bimodules, and degree preserving bimodule homomorphisms.  The 2-category $\Gr$ is a 2-full sub-2-category meaning that we define a subset of objects and morphisms, but allow all 2-morphisms between these chosen 1-morphisms.

\begin{defn}
The 2-category $\Gr$ is the graded additive 2-category with
\begin{itemize}
  \item objects: rings $H_{k,N}$, where $k \in \Z$ and $H_{k,N}$ is the zero ring for $k$ not between $0$ and $N$,
  \item 1-morphisms: arbitrary bimodule tensor products of the bimodules $H_{\uk;N}$ for all strictly increasing or strictly decreasing sequences $\uk$, and
  \item 2-morphisms: all degree-preserving bimodule homomorphisms.
\end{itemize}
We take the graded additive closure of all Hom categories so that we also allow formal direct sums of these tensor products and add additional grading shifted objects $x \la s\ra$ and grading shifted 2-morphisms $f\la s \ra \maps x\la s\ra \to y \la s \ra$ for each 1-morphism $x$, 2-morphism $f \maps x \to y$, and each $s \in \Z$.   A 2-morphism of $f \maps x \to y$ of degree $s$ is equivalent to a degree-preserving bimodule homomorphism $x \to y \la s \ra$.
\end{defn}

Note that by \eqref{eq_Hk-tensor} every 1-morphism in $\Gr$ is isomorphic to a direct sum of tensor products of bimodules $H_{\uk;N}$ for the sequences $\uk=(k,k+a)$ and $\uk=(k+a,k)$ for $0 \leq k \leq N$ and $0 \leq a\leq N-k$.

\begin{rem}
In \cite{Lau1} the 2-category $\Gr$ was defined as the Karoubi completion of the 2-category defined above for the sequences $\uk=(k,k+1)$ and $\uk=(k+1,k)$ for $0 \leq k <N$.  In this article we show that it is not necessary to take the Karoubi envelope if we consider all increasing and decreasing sequences.
\end{rem}

%
\subsection{The 2-functor $\Gamma_N$}
%

Let us recall the definition of the 2-functor $\Gamma_N$ from \cite[Section 7]{Lau1}. On objects the 2-functor $\Gamma_{N}$ sends
$n$ to the ring $H_{k;N}$ whenever $n$ and $k$ are compatible:
\begin{eqnarray}
 \Gamma_{N} \maps \Ucat & \to & \Gr \nn \\
 n & \mapsto &
  \left\{\begin{array}{ccl}
    H_{k;N} & & \text{with $n=2k-N$ and $0\leq k \leq N$,} \\
    0  & & \text{otherwise.}
  \end{array} \right.
\end{eqnarray}
Morphisms of $\Ucat$ get mapped by $\Gamma_N$ to graded bimodules
\begin{eqnarray} \label{eq_GammaN_def}
 \Gamma_{N} \maps \Ucat & \to & \Gr  \\
  \onen\la s\ra & \mapsto &
  \left\{\begin{array}{ccl}
    H_{k;N}\la s\ra & & \text{with $n=2k-N$ and $0\leq k \leq N$,} \\
    0  & & \text{otherwise.}
  \end{array} \right.  \nn \\
  \cal{E}\onen\la s\ra & \mapsto &
  \left\{\begin{array}{ccl}
    H_{k+1,k;N}\la s+1-N+k\ra & & \text{with $n=2k-N$ and $0\leq k < N$,} \\
    0  & & \text{otherwise.}
  \end{array} \right. \nn \\
  \cal{F}\onen\la s\ra & \mapsto &
  \left\{\begin{array}{ccl}
    H_{k-1,k;N}\la s+1-k\ra & & \text{with $n=2k-N$ and $0< k \leq N$,} \\
    0  & & \text{otherwise.}
  \end{array} \right. \nn
\end{eqnarray}
It will be convenient in what follows to introduce a simplified notation in $\Gr$.
Corresponding to a fixed value of $N$ set $n=2k-N$ and write
\begin{align} \label{eq_shorthand}
  \1_n^N &:= \Gamma_N(\onen) \nn \\
  \rE \1_n^N &= \1_{n+2}^N\rE = \1_{n+2}^N\rE\1_n^N := \Gamma_{N}(\cal{E}\onen) \nn \\
  \rF \1_n^N &= \1_{n-2}^N\rF = \1_{n-2}^N\rF\1_n^N := \Gamma_N(\cal{F}\onen)
\end{align}
as a shorthand for the various bimodules.  Juxtaposition of these symbols represents the tensor product of the corresponding bimodules. For example,
\[
\rF \rE\rE\1_n^N = H_{k+1,k+2;N} \otimes_{H_{k+2;N}} H_{k+2,k+1;N} \otimes_{H_{k+1;N}} H_{k+1,k;N}.
\]
Associated to a signed sequence $\ep$  is the $(H_{k+|\ep|},H_k)$-bimodule
\[
\rE_{\ep}\1_n^N := \rE_{\epsilon_1} \rE_{\epsilon_2}\dots \rE_{\epsilon_m}\1_n^N
\]
where $\rE_{+}:= \rE$ and $\rE_{-}:= \rF$. The 2-functor $\Gamma_N$ maps a composite $\cal{E}_{\ep}\onen$ of 1-morphisms in $\UcatD$ to the tensor product $\rE_{\ep}\1_n^N$ in $\Gr$. Note that because tensor product of bimodules is only associative up to coherent isomorphism our notation is ambiguous unless we choose a parenthesization of the bimodules in question.  We employ the convention that all parenthesis are on the far left.  Hence, $\Gamma_N$ preserves composition of 1-morphisms only up to coherent 2-isomorphism.

It is sometimes convenient to use the isomorphism \eqref{eq_Hk-tensor} so that
\begin{align}
  \Gamma_N(\cal{E}^a\onen) &\cong H_{k+a,k+a-1, \dots, k;N} \la r_a\ra,
  \qquad r_a = \sum_{i=1}^{a}  i-N+k \\
  \Gamma_N(\cal{F}^a\onen) &\cong H_{k,k+1, \dots, k+a;N} \la r'_a \ra ,
  \qquad r'_a =\sum_{i=1}^{a} i-k.
\end{align}
We also define bimodules
\begin{align}
  \rE^{(a)}\1_n^N &:=H_{k+a,k;N}\la r_a -\frac{a(a-1)}{2}\ra, \\
  \rF^{(b)}\1_n^N &:=H_{k+a,k;N}\la r_a +\frac{a(a-1)}{2}\ra.
\end{align}
For $x \in \B$ write $\rE(x)\1_n^N$ as in \eqref{eq_basis} for the corresponding tensor product of bimodules.

For the images of 2-morphisms see \cite[Section 7]{Lau1}.  We note that the images of caps and cups give  $\rE\1_n^N$ and $\rF\1_{n+2}^N$ a biadjoint structure.  The image of dot on the $i$th strand corresponds to the bimodule endomorphism of $\rE^a\1_n^N$ given by multiplication by $\xi_i$ and a crossing of the $i$ and $i+1$st strand acts by fixing generators $x_{j,n}$ and $y_{j,n+2a}$ for all $j$ and mapping $f \in \Q[\xi_1,\dots,\xi_a]$ by divided difference operators
\[
 \partial_i(f) := \frac{f - s_i(f)}{x_i - x_{i+1}}.
\]

For any permutation $w$ in the symmetric group $S_a$ let $w=s_{i_1} s_{i_2} \dots s_{i_m}$ be a reduced expression of $w$ in terms of elementary transpositions. We write $\partial_w = \partial_{i_1} \dots \partial_{i_m}$.  It is clear from the definition of the divided difference operator that the image consists of polynomials that are symmetric in both $\xi_i$ and $\xi_{i+1}$.  If $w_0$ is the longest word in the symmetric group $S_a$, then  $\partial_{w_0}(f) \in \Q[\xi_1, \dots, \xi_a]^{S_a}$.

The forgetful map $G_{k,k+1,\dots,k+a}^N \to G_{k,k+a}^N$ gives an inclusion of rings $H_{k,k+a;N} \hookrightarrow H_{k,k+1,\dots,k+a;N}$.  Under this map the variables $x_{j,n}$ and $y_{\ell,n+2a}$ are mapped to themselves and $\varepsilon_j$ is mapped to the $j$th elementary symmetric function $e_j(\xi_1,\dots, \xi_a)$.  There is a map going the other direction that fixes $x_{j,n}$ and $y_{\ell,n+2a}$ and maps $f \in \Q[\xi_1, \dots, \xi_a]$ to $\partial_{w_0}(f) \in  \Q[\xi_1, \dots, \xi_a]^{S_a}$.

%
\subsubsection{Adjoints}
%

Recall that the left and right adjoints are defined in \cite{Lau1} as follows.
\begin{alignat}{2}
  \left( \cal{E}\onen \la s\ra \right)^L &=  \cal{F}\onenn{n+2} \la n+1-s \ra,
  & \qquad
  \left( \cal{E}\onen \la s\ra \right)^R &=   \cal{F}\onenn{n+2} \la-n-1-s \ra,\nn \\
    \left( \cal{F}\onen\la s\ra \right)^L &=   \cal{E}\onenn{n-2} \la -n+1-s \ra,
  & \qquad
  \left(\cal{F}\onen\la s\ra\right)^R &=  \cal{E}\onenn{n-2} \la n-1-s \ra.
\end{alignat}
The 2-functor $\Gamma_N$ transports these to adjoints in $\Gr$
\begin{alignat}{2}
  \left( \rE\1_n^N \la s\ra \right)^L &=  \rF\1_{n+2}^N \la n+1-s \ra,
  & \qquad
  \left( \rE\1_{n}^N \la s\ra \right)^R &=   \rF\1_{n+2}^N \la-n-1-s \ra,\nn \\
    \left( \rF\1_{n}^N\la s\ra \right)^L &=   \rE\1_{n-2}^N \la -n+1-s \ra,
  & \qquad
  \left(\rF\1_{n}^N\la s\ra\right)^R &=  \rE\1_{n-2}^N \la n-1-s \ra.
\end{alignat}
In what follows we make use of the fact that  $(u^L)^R=u$ and $(v^R)^L=v$ for all 1-morphism and that the adjoints give rise to isomorphisms
\begin{alignat}{3}
 \Hom_{\Gr}(ux,y) &\; \cong \;& \Hom_{\Gr} (x, u^R y)
 &\qquad \Hom_{\Gr}(x,uy)&\; \cong \;&  \Hom_{\Gr} (u^Lx,  y) \nn\\
 \Hom_{\Gr}(xv,y) &\; \cong \;& \Hom_{\Gr}(x,yv^L)  &\qquad \Hom_{\Gr}(x,yv) &\; \cong \;& \Hom_{\Gr}(xv^R,y).
\end{alignat}

%
\subsection{Karoubi completeness of $\Gr$}
%

In this section we show that the 2-category $\Gr$ is Karoubi complete.  Karoubi completeness implies that the 2-functor $\Gamma_N$ extends to a 2-functor $\Gamma_N \maps \UcatD \to \Gr$.  We prove this result directly, rather than passing to a Karoubi completion of $\Gr$, to maintain an explicit description of all 1-morphisms in the 2-category $\Gr$.  This will be needed to realize $\UcatD$ as an inverse limit of flag 2-categories.

It was shown in \cite{KLMS} (see also \cite{Lau1}) that the action of the nilHecke algebra provides an explicit decomposition of $\cal{E}^a\onen$ into a direct sum $\bigoplus_{[a]!}\cal{E}^{(a)} \onen$.  The projection onto the lowest degree summand $\cal{E}^{(a)} \onen \la -\frac{a(a-1)}{2}\ra $ is given by the image of the idempotent $e_a=x^{\delta}\partial_{w_0}:=x_1^{a-1}x_{a-1}^{a-2}\dots x_a^0\partial_{w_0}$, shown below for $a=4$.
\[
\xy 0;/r.15pc/:
 (-12,-20)*{}; (12,20) **\crv{(-12,-8) & (12,8)}?(1)*\dir{>};
 (-4,-20)*{}; (4,20) **\crv{(-4,-13) & (12,2) & (12,8)&(4,13)}?(1)*\dir{>};?(.88)*\dir{}+(0.1,0)*{\bullet};
 (4,-20)*{}; (-4,20) **\crv{(4,-13) & (12,-8) & (12,-2)&(-4,13)}?(1)*\dir{>}?(.86)*\dir{}+(0.1,0)*{\bullet};
 ?(.92)*\dir{}+(0.1,0)*{\bullet};
 (12,-20)*{}; (-12,20) **\crv{(12,-8) & (-12,8)}?(1)*\dir{>}?(.70)*\dir{}+(0.1,0)*{\bullet};
 ?(.90)*\dir{}+(0.1,0)*{\bullet};?(.80)*\dir{}+(0.1,0)*{\bullet};
 \endxy
\]
The image of this idempotent under $\Gamma_N$ gives an idempotent 2-morphism $\Gamma_N(e_a) \maps \rE^a \1_n^N \to \rE^a \1_n^N$.

\begin{prop}
The idempotent 2-morphism $\Gamma_N(e_a) \maps \rE^a \1_n^N \to \rE^a \1_n^N$ splits in $\Gr$.
\end{prop}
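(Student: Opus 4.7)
\textit{Proof plan.} The plan is to split $\Gamma_N(e_a)$ by writing down explicit bimodule maps
\[
\iota\maps \rE^{(a)}\1_n^N\la -\tfrac{a(a-1)}{2}\ra \longrightarrow \rE^a\1_n^N,
\qquad
\pi\maps \rE^a\1_n^N \longrightarrow \rE^{(a)}\1_n^N\la -\tfrac{a(a-1)}{2}\ra
\]
satisfying $\pi\iota = \id$ and $\iota\pi = \Gamma_N(e_a)$. Both $\rE^{(a)}\1_n^N$ and (via \eqref{eq_Hk-tensor}) $\rE^a\1_n^N$ are already 1-morphisms of $\Gr$, so no Karoubi envelope is required: we just need to produce the two 2-morphisms inside $\Gr$.

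First identify $\rE^a\1_n^N \cong H_{k,k+1,\dots,k+a;N}\la r_a\ra$; then $\Gamma_N(e_a)$ acts as multiplication by $\xi^{\delta}:=\xi_1^{a-1}\xi_2^{a-2}\cdots\xi_a^0$ followed by the longest divided difference operator $\partial_{w_0}$. Set $\pi=\partial_{w_0}$, whose image lies in $\Q[\xi_1,\dots,\xi_a]^{S_a}$; composing with the identification of invariants with $H_{k,k+a;N}$ (sending $e_j(\xi)\mapsto \varepsilon_j$) gives the required map. Take $\iota$ to be the composition in the other direction: the ring inclusion $\iota_0\maps H_{k,k+a;N}\hookrightarrow H_{k,k+1,\dots,k+a;N}$ recalled at the end of Section 3.3, followed by multiplication by $\xi^{\delta}$.

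To see these are bimodule maps, note that $\pi$ and the factor ``multiplication by $\xi^{\delta}$'' in $\iota$ only affect the $\xi_i$-variables. The right $H_{k;N}$-action is via the generators $x_{j,n}$ and $y_{\ell,n}$, which commute with any polynomial in the $\xi_i$. The left $H_{k+a;N}$-action is via $y_{\ell,n+2a}$ (tautologically) and $x_{j,n+2a}$, and the defining equation of Section~\ref{eq_cohomology} expresses $x_{j,n+2a}$ as $\sum_i x_{i,n}\,e_{j-i}(\xi_1,\dots,\xi_a)$, which is symmetric in all of $\xi_1,\dots,\xi_a$. Since multiplication by a fully symmetric polynomial commutes with every $\partial_i$ and with multiplication by $\xi^{\delta}$, both $\iota$ and $\pi$ are honest bimodule maps. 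For the degrees, $\deg(\xi^{\delta})=a(a-1)$ and $\deg(\partial_{w_0})=-a(a-1)$, matching the shift $r_a-(r_a-a(a-1))=a(a-1)$ between $\rE^{(a)}\1_n^N\la -\tfrac{a(a-1)}{2}\ra$ and $\rE^a\1_n^N$.

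The identity $\iota\pi = \xi^{\delta}\partial_{w_0} = \Gamma_N(e_a)$ holds on the nose. For $\pi\iota$, given a symmetric $g\in\Q[\xi]^{S_a}$,
\[
\pi\iota(g) \;=\; \partial_{w_0}\bigl(\xi^{\delta}\cdot g\bigr) \;=\; g\cdot \partial_{w_0}(\xi^{\delta}) \;=\; g,
\]
using symmetry of $g$ to pull it through $\partial_{w_0}$ and then the classical nilHecke identity $\partial_{w_0}(\xi^{\delta})=1$. The only step requiring any care is the bimodule-linearity check, which reduces to the observation above that $x_{j,n+2a}$ becomes symmetric in $\xi_1,\dots,\xi_a$ inside $H_{k,k+1,\dots,k+a;N}$; everything else is formal.
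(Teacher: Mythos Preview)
Your proof is correct and follows exactly the paper's approach: factor $\Gamma_N(e_a)$ through $\rE^{(a)}\1_n^N\la -\tfrac{a(a-1)}{2}\ra$ via $\pi=\partial_{w_0}$ and $\iota=$ (inclusion followed by multiplication by $\xi^{\delta}$), and verify the splitting using $\partial_{w_0}(\xi^{\delta}g)=g$ for symmetric $g$. One small slip: your prose says $\Gamma_N(e_a)$ is ``multiplication by $\xi^{\delta}$ followed by $\partial_{w_0}$,'' but your formula $\iota\pi=\xi^{\delta}\partial_{w_0}$ (apply $\partial_{w_0}$ first, then multiply) is the correct order and is what the paper uses.
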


\begin{proof}
The splitting is provided using the bimodule
$\rE^{(a)}\1_n^N$.  In particular, $\Gamma_N(e_a)$ is the composite
\begin{equation} \nn
  \xymatrix{
   \rE^a \1_n^N \ar[rr]^-{\partial_{w_0}} && \rE^{(a)}\1_n^N \la -\frac{a(a-1)}{2}\ra  \ar@{^{(}->}[rr]^-{\xi^{\delta}} & &
   \rE^a \1_n^N
  }
\end{equation}
where $\partial_{w_0}$ denotes the divided difference operator corresponding to the longest word $w_0 \in S_n$, and $\xi^{\delta}$ denotes the inclusion followed by the bimodule homomorphism of multiplication by $\xi^{\delta}= \xi_1^{a-1} \xi_2^{a-2} \dots \xi_a^0$.  This is a splitting because for $f \in \Q[\varepsilon_1,\dots,\varepsilon_a]$ the inclusion maps $f$ to a symmetric function in $\Q[\xi_1, \dots, \xi_a]^{S_a}$ so that $\partial_{w_0}(\xi^{\delta}f)=f$.
\end{proof}

\begin{cor}
There is an explicit decomposition of bimodules
\[
 \rE^a \1_n^N \cong \bigoplus_{[a]!} \rE^{(a)} \1_n^N
\]
in $\Gr$.
\end{cor}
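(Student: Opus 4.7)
The plan is to promote the single splitting from the previous proposition to a full decomposition by splitting an entire complete family of primitive orthogonal idempotents in the nilHecke action on $\rE^a\1_n^N$. Recall from \cite{KLMS} that in the nilHecke algebra $\NH_a$ there is a decomposition $1 = \sum_{w \in S_a} e_{a,w}$ into mutually orthogonal primitive idempotents, where each $e_{a,w}$ is obtained from the lowest-degree idempotent $e_a = \xi^\delta \partial_{w_0}$ by replacing the multiplication factor $\xi^\delta$ with $w(\xi^\delta)$ and twisting $\partial_{w_0}$ appropriately, so that $e_{a,w}$ sits in the same bidegree as $e_a$ up to the shift $2\ell(w) - 2\ell(w_0)$ coming from the Schubert basis.

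First I would transport this structure to $\Gr$ via $\Gamma_N$, obtaining $[a]!$ mutually orthogonal idempotent bimodule endomorphisms $\Gamma_N(e_{a,w}) \maps \rE^a\1_n^N \to \rE^a\1_n^N$ that sum to the identity. Next, for each $w$ I would repeat the splitting argument of the preceding proposition: the map $\partial_{w_0}$ still provides a surjection $\rE^a\1_n^N \twoheadrightarrow \rE^{(a)}\1_n^N\langle -\tfrac{a(a-1)}{2}\rangle$, and one may precompose with multiplication by $w(\xi^\delta)$ instead of $\xi^\delta$; since $w(\xi^\delta)$ is still a Schubert class and the key identity $\partial_{w_0}(w(\xi^\delta)\cdot f) = f$ for symmetric $f$ continues to hold (this is the standard orthogonality of Schubert polynomials under $\partial_{w_0}$), each $\Gamma_N(e_{a,w})$ factors through a grading-shifted copy of $\rE^{(a)}\1_n^N$.

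Assembling the $[a]!$ splittings into a single pair of bimodule maps
\[
 \rE^a\1_n^N \;\longrightarrow\; \bigoplus_{w \in S_a} \rE^{(a)}\1_n^N\langle s_w \rangle \;\longrightarrow\; \rE^a\1_n^N,
\]
where the shifts $s_w$ are determined by $\deg(w(\xi^\delta))$ and collectively produce the quantum factorial $[a]!$, the orthogonality relations $e_{a,w} e_{a,w'} = \delta_{w,w'} e_{a,w}$ and the completeness relation $\sum_w e_{a,w} = 1$ together force these maps to be mutually inverse isomorphisms. As a consistency check, the graded rank formula \eqref{grdim1} rewritten as $\binom{N}{k+a}_{q^2}\binom{k+a}{a}_{q^2} = \binom{N}{k+a}_{q^2} \cdot [a]!\, q^{-a(a-1)/2}\cdot(\text{shift})$ matches the graded rank on both sides.

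The main obstacle is bookkeeping: one must verify that the precise grading shifts $s_w$ assemble into $[a]!$ and that the conjugate idempotents $e_{a,w}$ really do split in $\Gr$ by the same mechanism. Both points are essentially the content of the nilHecke calculations in \cite{KLMS}, so one would cite them rather than redo the combinatorics; the only new ingredient beyond the previous proposition is the observation that replacing the Schubert monomial $\xi^\delta$ by $w(\xi^\delta)$ preserves the factorization through $\rE^{(a)}\1_n^N$, which again rests on Schubert orthogonality under $\partial_{w_0}$.
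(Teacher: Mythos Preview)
Your approach is the paper's: both argue that once the single idempotent $e_a$ has been shown to split through $\rE^{(a)}\1_n^N$ (the previous proposition), the full family of orthogonal idempotents in the nilHecke action, transported via $\Gamma_N$, yields the $[a]!$ summands; the paper's proof is literally two sentences plus the citation to \cite[Section 2.5]{KLMS}.

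One correction to your exposition, though it does not break the argument since you defer the combinatorics to \cite{KLMS} anyway: the conjugate idempotents are \emph{not} obtained by replacing $\xi^\delta$ with $w(\xi^\delta)$. Those elements all have the same degree $\binom{a}{2}$, so they cannot account for the varying shifts $s_w$ you need, and the resulting endomorphisms $w(\xi^\delta)\partial_{w_0}$ are not mutually orthogonal (indeed $\partial_{w_0}(w'(\xi^\delta))=\pm 1$ for every $w'$, so the cross-products never vanish). The correct idempotents in \cite{KLMS} are built from Schubert polynomials $\mathfrak{S}_w$ of degree $\ell(w)$, and it is precisely this varying degree that produces the shifts summing to $[a]!$.
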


\begin{proof}
Once the image of the idempotent $e_a$ has been identified in $\Gr$, the action of the nilHecke algebra provides explicit maps giving the decomposition.  See \cite[Section 2.5]{KLMS} for more details of this explicit decomposition.
\end{proof}

Having identified the images of divided powers $\cal{E}^{(a)}\onen$ and $\cal{F}^{(a)}\onen$ in $\Gr$ we can now make use of the results from Sections 5.1 and 5.2 of \cite{KLMS} using the 2-morphisms in $\UcatD$ to give explicit isomorphisms
\begin{align}
\rE^{(a)}\rE^{(b)}\1_n^N &\cong \bigoplus_{\qbins{a+b}{a}}\rE^{(a+b)}\1_n^N , \label{eq_tired1}\\
 \rE^{(a)}\rF^{(b)}\1_n^N&\cong
\bigoplus_{j=0}^{\min(a,b)}\bigoplus_{\qbins{a-b+n}{j}}\rF^{(b-j)}
\rE^{(a-j)}\1_n^N, \quad
\text{if $n\ge b-a$}, \label{eq_tired2}\\
 \rF^{(b)}\rE^{(a)}\1_n^N&\cong
\bigoplus_{j=0}^{\min(a,b)}\bigoplus_{\qbins{b-a-n}{j}}\rE^{(a-j)}\rF^{(b-j)}\1_n^N
\quad
\text{if $n\leq b-a$.} \label{eq_tired}
\end{align}

We now look at the space of bimodule endomorphisms of $\rE^{(a)}\1_n^N$ in order to show that these bimodules are indecomposable.  The main technique utilizes the biadjoint structure together with the relations above. Below all Homs are taken in $\Gr$.

\begin{prop}\label{prop.2.4} For $n=2k-N$, we have isomorphisms of graded abelian groups
\begin{equation}
 \END (\rE^{(a)}\1_{n}^N) \cong    H_{k;k+a}\otimes H_{k+a; N}.
 \nn \end{equation}
\end{prop}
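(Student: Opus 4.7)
The plan is as follows. By the definitions in \eqref{eq_GammaN_def}, $\rE^{(a)}\1_n^N$ is the bimodule $H_{k+a,k;N}$ with an overall grading shift; since such a uniform shift cancels when computing a graded endomorphism module, it suffices to compute the graded abelian group of $(H_{k+a;N},H_{k;N})$-bimodule endomorphisms of $H_{k+a,k;N}$. Because both coefficient rings are commutative, this is the same as $H_{k+a;N}\otimes_{\Q} H_{k;N}$-module endomorphisms.

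The first substantive step is to observe that the canonical ring map
\[
\mu\maps H_{k+a;N}\otimes_{\Q} H_{k;N}\longrightarrow H_{k+a,k;N},
\]
induced by the forgetful maps $G_{k,k+a}^N\to G_{k+a}^N$ and $G_{k,k+a}^N\to G_{k}^N$, is surjective. Inspecting the presentations of Section~\ref{eq_cohomology}, the image of $\mu$ contains all of the $x_{j,n}$ and $y_{l,n+2a}$ immediately, so the point is to check that each $\varepsilon_j$ is also in the image. This will follow by equating the total Chern classes
\[
1+x_{1,n+2a}t+\cdots+x_{k+a,n+2a}t^{k+a}\;=\;(1+x_{1,n}t+\cdots+x_{k,n}t^{k})(1+\varepsilon_1 t+\cdots+\varepsilon_a t^a),
\]
and recursively solving for $\varepsilon_j$ as a polynomial in the $x_{i,n}$ (which lie in the image of $\mu$ via $H_{k;N}$) and the $x_{i,n+2a}$ (which lie in the image via $H_{k+a;N}$).

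Granted this surjectivity, $H_{k+a,k;N}$ is cyclic over $H_{k+a;N}\otimes_{\Q} H_{k;N}$, generated by $1$. For a cyclic quotient $R/J$ of a commutative ring $R$, any $R$-module endomorphism is multiplication by an element of $R/J$ (the ideal $J$ automatically annihilates the quotient), so
\[
\END(\rE^{(a)}\1_n^N)\;\cong\;H_{k+a,k;N}
\]
as graded abelian groups. Finally I will identify $H_{k+a,k;N}\cong H_{k;k+a}\otimes H_{k+a;N}$ via the Leray--Hirsch decomposition for the Grassmann bundle $G_{k,k+a}^N\to G_{k+a}^N$ with fiber $G_{k}^{k+a}$; the matching of graded dimensions is also visible through the identity $\qbin{N}{k+a}\qbin{k+a}{k}=\qbin{N}{k}\qbin{N-k}{a}$ implicit in \eqref{grdim1}.

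The only mildly technical point is the surjectivity of $\mu$: one has to verify that the inductive expressions for the $\varepsilon_j$ really stay inside the image. Everything else is formal given the commutativity of these cohomology rings together with the classical Grassmann-bundle decomposition of the cohomology of partial flag varieties.
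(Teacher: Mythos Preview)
Your argument is correct and takes a genuinely different route from the paper's.

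The paper proceeds by induction on $k$ and $a$, repeatedly using biadjointness together with the decompositions \eqref{eq_tired1}--\eqref{eq_tired} to reduce $\END(\rE^{(a)}\1_n^N)$ to simpler endomorphism spaces, and then compares graded ranks. Your approach bypasses the categorical machinery entirely: you observe that $H_{k,k+a;N}$ is a cyclic module over the commutative ring $H_{k+a;N}\otimes_{\Q}H_{k;N}$ (via the surjectivity of $\mu$, which you verify by solving the Whitney-sum relation for the $\varepsilon_j$), so its bimodule endomorphism ring is just itself; then Leray--Hirsch for the Grassmann bundle $G_{k,k+a}^N\to G_{k+a}^N$ gives the desired tensor decomposition. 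This is more elementary and in fact yields slightly more---it identifies $\END(\rE^{(a)}\1_n^N)$ with $H_{k,k+a;N}$ as a graded \emph{ring}, not merely as a graded abelian group, which is exactly the content used later in the proof of Theorem~\ref{thm_full}. The paper's inductive argument, on the other hand, stays within the diagrammatic framework and illustrates how the biadjoint structure alone controls the size of all Hom spaces, which is thematically central to the paper. Both arguments eventually invoke the same fibration (the paper does so in Theorem~\ref{thm_full}); you simply front-load that geometric input.
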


\begin{proof}
The proof is by induction on $k$ and $a$.
For $k=0, a=1$, it is easy to verify the claim.
Indeed, applying biadjointness and ${\mathfrak {sl}}_2$ relations we have
\begin{align*}
  \End^m (\rE \1_{-N}^N) & \cong
  \Hom^m(\1_{-N}^N, (\rE \1_{-N}^N)^R \rE\1_{-N}^N) \\
  &\cong \Hom^m(1_N^N, \rF\rE \1_{-N}^N \la N-1\ra) \\
  &\cong
\Hom^{m+N-1}(1^N_{-N},\rE\rF \1_{-N}^N) \bigoplus_{j=0}^{N-1} \End^{m+2j}( \1_{-N}^N).
\end{align*}
Recall that the bimodule $\1_{-N}^N:=\Gamma_N(\onenn{-N})$ is just the cohomology ring $H_{0;N} \cong \Q$, so that the space of bimodule endomorphisms $\END(\1_{-N}^N)\cong \Q$.
Note also that $\rF\1_{-N}^N=0$.  Hence, the first summand  above vanishes.
The other summands are isomorphic to $H_{1;N}$, as desired.

The induction step works as follows. Let us assume $n\leq -a+1$, then
applying \eqref{eq_tired1}, \eqref{eq_tired} and biadjointness again, we get
\begin{align*}
  \Hom^m (\oplus_{[a]}{\rE^{(a)}}\1_{n}^N, \rE^{(a)}1_n) & \cong
  \Hom^m ({\rE^{(a-1)}} \1_{n}^N, (\rE \1_{n+2a-2}^N)^R \rE^{(a)}\1_{n}^N) \\
  &\cong \Hom^m (\rE^{(a-1)}\1_n^N, \rF  \rE^{(a)} \1_{n}^N \la -n-2a+1 \ra) \\
  &\cong
  \Hom^{m-n-2a+1}( \rE^{(a-1)}1^N_n,  \rE^{(a)}\rF\1_{n}^N)
\bigoplus_{[1-a-n]} \End^{m-n-2a+1}( \rE^{(a-1)} \1_{n}^N)
\\
& \cong
\Hom^{m-2n-2a+2}(\oplus_{[a]} \rE^{(a)}1^N_{n-2},  \rE^{(a)}\1_{n-2}^N)
\bigoplus_{[1-a-n]} \End^{m-n-2a+1}( \rE^{(a-1)} \1_{n}^N).
\end{align*}
 We are left to
check that after applying
 induction hypothesis
the graded dimension of the last formula is
$[a] {\rm rk}_q (H_{k;k+a} \otimes H_{k+a;N})$. Indeed,
\begin{align*}
 q^{2N-4k-2a+2} \;{\rm rk}_q \;  H_{k-1; k+a-1} \; {\rm rk}_q \;  H_{k+a-1;N} \;
+  \frac{ [N-2k-a+1]_{q^2}}{[a]_{q^2}} \;{\rm rk}_q \;  H_{k; k+a-1} \; {\rm rk}_q \;  H_{k+a-1;N}
\end{align*}
 is equal to  $ {\qbins{k+a}{k}}_{q^2} {\qbins{N}{k+a}}_{q^2}$.

The case of $n\geq -a+1$ is similar. Here one should consider
\[\Hom^m(\rE^{(a)}\1_n^N, \oplus_{[a]} \rE^{(a)}\1_n^N)=
\Hom^{m+n+1} (\rE^{(a)}\rF\1_{n+2}^N,  \rE^{(a-1)}\1_{n+2}^N)
\]
then apply \eqref{eq_tired2} and induction on $a$ and $N-k$.
\end{proof}

\begin{cor}
The bimodule $\rE^{(a)} \1_{n}^N$ with $n=2k-N$ for $0 \leq k \leq N$ is an indecomposable summand of $\rE^a\1_n^N$.
\end{cor}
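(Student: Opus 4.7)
The plan is to leverage Proposition \ref{prop.2.4} directly: once the endomorphism ring has been computed as a graded vector space, indecomposability will follow by a standard connectedness argument on the grading, and the "summand" part is already built into the earlier decomposition $\rE^a\1_n^N \cong \bigoplus_{[a]!}\rE^{(a)}\1_n^N$.

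First I would recall that being a direct summand is immediate: the explicit splitting of the nilHecke idempotent $e_a$ identifies $\rE^{(a)}\1_n^N$ as the image of an idempotent endomorphism of $\rE^a\1_n^N$ in $\Gr$, and the action of the nilHecke algebra exhibits the full decomposition $\rE^a\1_n^N \cong \bigoplus_{[a]!}\rE^{(a)}\1_n^N$ (as recorded in \eqref{eq_tired1} with $b=0$, or more basically in the corollary above). So the task reduces to showing indecomposability.

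For indecomposability, the key step is to pass from the graded endomorphism ring of Proposition \ref{prop.2.4} to its degree zero part. Proposition \ref{prop.2.4} gives an isomorphism of graded abelian groups
\[
\END(\rE^{(a)}\1_n^N) \;\cong\; H_{k;k+a}\otimes H_{k+a;N}.
\]
Both $H_{k;k+a}$ and $H_{k+a;N}$ are cohomology rings of complex partial flag varieties, hence connected non-negatively graded $\Q$-algebras with one-dimensional degree zero component. Consequently the degree zero component of the tensor product is
\[
\End(\rE^{(a)}\1_n^N) \;=\; \Hom^{0}(\rE^{(a)}\1_n^N,\rE^{(a)}\1_n^N) \;\cong\; \Q,
\]
spanned by the identity 2-morphism.

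Since the endomorphism ring of $\rE^{(a)}\1_n^N$ in $\Gr$ (a graded additive $\Q$-linear 2-category) is a field, it is local and admits no nontrivial idempotents. Therefore $\rE^{(a)}\1_n^N$ is indecomposable. The main subtlety — really the only thing that requires care — is verifying that indecomposability in the graded additive category $\Gr$ is detected by the degree zero part of $\END$; this is true precisely because all grading shifts $\rE^{(a)}\1_n^N\la s\ra$ are distinct objects of $\Gr$, so any idempotent decomposition must take place inside $\End^0$, which we have just shown to be $\Q$.
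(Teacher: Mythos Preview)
Your proof is correct and follows essentially the same approach as the paper: both arguments deduce indecomposability from Proposition~\ref{prop.2.4} by observing that the degree zero part of $\END(\rE^{(a)}\1_n^N)\cong H_{k;k+a}\otimes H_{k+a;N}$ is one-dimensional. You have simply spelled out in more detail what the paper compresses into a single sentence, including the (implicit) summand claim and the remark about why degree zero suffices in the graded setting.
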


\begin{proof}
By Proposition \ref{prop.2.4},
 its space of endomorphisms in degree zero
is one-dimensional.
\end{proof}

 Recall that $\rE(x):=\Gamma_N(\cal{E}(x))$ denotes the images in $\Gr$ of the indecomposable 1-morphisms in $\UcatD$ defined in \eqref{eq_basis}.

\begin{prop}
The nonzero bimodules $\rE(x)$ for $x$ in Lusztig canonical basis $\B$ are indecomposable.
\end{prop}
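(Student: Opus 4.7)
The plan is to adapt the strategy of Proposition~\ref{prop.2.4} and its corollary: since a graded bimodule in $\Gr$ whose degree-zero endomorphism space is one-dimensional over $\Q$ is automatically indecomposable, it suffices to verify $\END^0(\rE(x)) \cong \Q$ for each $x \in \B$ with $\rE(x) \neq 0$. By the horizontal-reflection symmetry of $\Ucat$ that exchanges $\cal{E}$ with $\cal{F}$ and $\onen$ with $\onenn{-n}$ (compatible on the geometric side with the duality $G_k^N \cong G_{N-k}^N$), the two families in \eqref{eq_basis} are interchanged, so I only need to handle case (ii): $\rE(x) = \rF^{(b)}\rE^{(a)}\1_n^N$ with $n \geq b - a$.

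The core is a two-step adjunction reduction back to Proposition~\ref{prop.2.4}. First I would use biadjointness to write
\[
\END(\rF^{(b)}\rE^{(a)}\1_n^N) \;\cong\; \Hom\bigl(\rE^{(a)}\1_n^N,\;\rE^{(b)}\rF^{(b)}\rE^{(a)}\1_n^N\bigr)\la s_1\ra
\]
for an explicit shift $s_1$. The assumption $n \geq b-a$ forces $n+2a \geq a+b \geq 0$, so \eqref{eq_tired2} applies at weight $n+2a$; together with \eqref{eq_tired1} this decomposes the right-hand side as a graded direct sum, with multiplicities $\qbins{n+2a}{j}\qbins{a+b-j}{a}$, of copies of $\Hom\bigl(\rE^{(a)}\1_n^N,\,\rF^{(b-j)}\rE^{(a+b-j)}\1_n^N\bigr)$ for $0 \leq j \leq \min(a,b)$. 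A second biadjointness move, followed by another application of \eqref{eq_tired1} to $\rE^{(b-j)}\rE^{(a)}\1_n^N$, converts each such Hom into a graded sum of shifted copies of $\END(\rE^{(a+b-j)}\1_n^N)$, which is computed explicitly by Proposition~\ref{prop.2.4}.

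It then remains to check that, after accounting for all the shifts and binomial multiplicities, the coefficient of $q^0$ in the graded dimension of this direct sum is exactly $1$. I expect this to collapse via a telescoping quantum binomial identity of the same flavour as the one that closes the proof of Proposition~\ref{prop.2.4}; the inequality $n \geq b - a$ enters essentially here, since \eqref{eq_tired2} requires precisely this condition. The principal obstacle I anticipate is the careful bookkeeping of the many grading shifts and the verification of this quantum binomial identity isolating the coefficient of $q^0$. Beyond that, no idea not already present in the proof of Proposition~\ref{prop.2.4} should be required.
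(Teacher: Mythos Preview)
Your proposal is correct and follows essentially the same route as the paper: the paper's proof is a one-line reference to graded-rank computations via biadjointness and the relations \eqref{eq_tired1}--\eqref{eq_tired}, carried out exactly as in \cite[Proposition~9.10]{Lau1}, and what you have written is a faithful expansion of that argument. The only thing to be careful about is the bookkeeping you flag yourself---tracking the shifts through the two adjunction moves and verifying the resulting $q=0$ coefficient---but no new idea beyond Proposition~\ref{prop.2.4} is needed.
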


\begin{proof}
This follows from the graded rank computations above using the biadjoint structure and the relations \eqref{eq_tired1}--\eqref{eq_tired} in $\Gr$ following the arguments in \cite[Proposition 9.10]{Lau1}.
\end{proof}

\begin{thm} \label{thm_Karoubi}
The 2-category $\Gr$ is Karoubi complete.
\end{thm}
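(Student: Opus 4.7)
The plan is to show that $\Gr$ has the Krull--Schmidt property and then invoke the standard fact that any Krull--Schmidt additive category is Karoubi complete. The three ingredients are already in place: (i) explicit direct-sum decompositions inside $\Gr$ via the isomorphisms \eqref{eq_tired1}--\eqref{eq_tired}, (ii) indecomposability of the bimodules $\rE(x)\1_n^N$ for $x\in\B$, and (iii) local endomorphism rings for each such indecomposable.

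First, I would argue that every 1-morphism in $\Gr$ is isomorphic to a finite direct sum of grading-shifted copies of the indecomposables $\rE(x)\1_n^N$. By definition, every 1-morphism in $\Gr$ is a direct sum of tensor products of bimodules $H_{\uk;N}$ for strictly increasing or strictly decreasing sequences $\uk$. The isomorphism \eqref{eq_Hk-tensor} expresses each such bimodule as a tensor product of the generators $\rE^{(a)}\1_n^N$ and $\rF^{(b)}\1_n^N$ (up to shift), which live in $\Gr$ by our enlarged choice of sequences $\uk = (k,k+a)$. I would then repeatedly apply \eqref{eq_tired1}, \eqref{eq_tired2} and \eqref{eq_tired} to normalize any word in $\rE$'s and $\rF$'s into a direct sum of canonical 1-morphisms $\rE(x)\1_n^N\la s\ra$.

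Second, I would verify that each such indecomposable has local graded endomorphism ring. By Proposition~\ref{prop.2.4} and the identification of $\rE(x)\1_n^N$ as a summand of an $\rE^{(a)}\rF^{(b)}$ or $\rF^{(b)}\rE^{(a)}$ bimodule, the degree-zero endomorphism space is one-dimensional and equal to $\Q$, while the full graded endomorphism ring is non-negatively graded and finite-dimensional in each degree. Hence a degree-zero endomorphism is either a nonzero scalar (invertible) or zero, and a positive-degree endomorphism is nilpotent. It follows that the endomorphism ring is local.

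Third, for an arbitrary idempotent $e\maps X\to X$ in $\Gr$, write $X\cong \bigoplus_i \rE(x_i)\1_n^N\la s_i\ra$ by step one. Expressing $e$ as a matrix $(e_{ij})$ with respect to this decomposition, the Krull--Schmidt--Remak--Azumaya theorem applied to a finite direct sum of objects with local endomorphism rings shows that $e$ is conjugate to a diagonal idempotent whose entries are $0$ or $\Id$, and such a diagonal idempotent splits tautologically. The main obstacle is the bookkeeping in step one: one must check that every iterated tensor product in $\Gr$ can in fact be brought into a canonical $\rE(x)$-form using only maps that live inside $\Gr$, but this is guaranteed by the fact that the decompositions \eqref{eq_tired1}--\eqref{eq_tired} come from the diagrammatic 2-morphisms of $\UcatD$ pushed forward by $\Gamma_N$ together with the splittings of the nilHecke idempotents already established above.
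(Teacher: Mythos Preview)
Your proposal is correct and follows essentially the same strategy as the paper: decompose an arbitrary 1-morphism into the indecomposables $\rE(x)\1_n^N$ using \eqref{eq_tired1}--\eqref{eq_tired}, then use indecomposability (one-dimensional degree-zero endomorphisms) to conclude. The paper compresses your steps two and three into a citation of \cite[Proposition~9.10]{Lau1}, whereas you spell out the Krull--Schmidt/local-endomorphism-ring argument explicitly; otherwise the arguments are the same.
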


\begin{proof}
Knowing that the bimodules $\rE(x)$ are indecomposable in $\Gr$ and having the relations \eqref{eq_tired1}--\eqref{eq_tired} provides a decomposition of an arbitrary 1-morphism into indecomposables as in \cite[Proposition 9.10]{Lau1}.
\end{proof}

All 1-morphisms in $\Gr$ are finite direct sums of tensor products of bimodules of finite Krull dimension over rings with finite Krull dimension. Hence, the space of bimodule endomorphisms $\END_{\Gr}(x)$  of a 1-morphism $x$ in $\Gr$ is finite dimensional in each degree. This implies that the 2-category $\Gr$ has the Krull-Schmidt property so that any 1-morphism $x$ in $\Gr$ can be written as a unique direct sum of indecomposables.

%
\subsection{Properties of the 2-representations $\Gamma_N$}
%

In this section we show that the adjoint structure in $\Gr$ inherited from the adjoints in $\UcatD$ by the 2-functors $\Gamma_N$ strongly controls the size of Homs between 1-morphisms in the image of this 2-functor.

Recall that a functor is essentially surjective if every object in the target is isomorphic to an object in the image of the functor.  The functor is full/faithful if it is surjective/injective on Hom sets.  A functor that is full, faithful, and essentially surjective is an equivalence of categories, see for example~\cite[Section IV.4]{MacLane}.

\begin{defn}
A 2-functor $F \maps \cal{K} \to \cal{K}'$ is called locally full (respectively faithful, essentially surjective) if each functor on hom-categories $F \maps \cal{K}(x,y) \to \cal{K}'(Fx,Fy)$ is full (faithful/essentially surjective).
\end{defn}

\begin{prop} \label{prop_local_surjective}
The 2-functor $\Gamma_N$ is surjective on objects and locally essentially surjective.
\end{prop}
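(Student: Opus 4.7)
The plan is to separate the two claims, and for local essential surjectivity to reduce everything to showing that the two-term flag bimodules $H_{k,k+a;N}$ and $H_{k+a,k;N}$ lie in the image of $\Gamma_N$ up to grading shift.

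Surjectivity on objects is immediate from \eqref{eq_GammaN_def}: every object of $\Gr$ is a graded ring $H_{k;N}$ for some $k\in\Z$ (the zero ring when $k\notin[0,N]$), and setting $n=2k-N$ gives $\Gamma_N(n)=H_{k;N}$. Since $k\mapsto 2k-N$ is a bijection of $\Z$, every object of $\Gr$ is hit on the nose.

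For local essential surjectivity, I would fix weights $n_1,n_2\in\Z$ and let $B$ be an arbitrary 1-morphism of $\Gr$ from $\Gamma_N(n_1)$ to $\Gamma_N(n_2)$. By the definition of $\Gr$, $B$ is isomorphic to a finite direct sum, with grading shifts, of bimodule tensor products $H_{\uk_1;N}\otimes\cdots\otimes H_{\uk_m;N}$ for strictly monotone sequences $\uk_i$. Applying the identification \eqref{eq_Hk-tensor} to each $H_{\uk_i;N}$ further rewrites $B$ as a direct sum of grading-shifted tensor products of two-term flag bimodules $H_{k,k+a;N}$ and $H_{k+a,k;N}$. Since $\Gamma_N$ is a 2-functor, tensor products of images are canonically isomorphic to images of compositions, and since both $\UcatD$ and $\Gr$ are graded additive, direct sums and grading shifts are preserved; it therefore suffices to identify each two-term flag bimodule, up to a grading shift, as a $\Gamma_N$-image.

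The two-term flag bimodules are exactly what the divided-power 1-morphisms produce. With $n=2k-N$, up to a grading shift $H_{k+a,k;N}$ is $\rE^{(a)}\1_n^N=\Gamma_N(\cal{E}^{(a)}\onen)$ and $H_{k,k+a;N}$ is $\rF^{(a)}\1_{n+2a}^N=\Gamma_N(\cal{F}^{(a)}\onenn{n+2a})$, and these images of divided powers are legitimate arrows in $\Gr$ precisely because Theorem~\ref{thm_Karoubi} guarantees that $\Gr$ is Karoubi complete, so $\Gamma_N$ extends from $\Ucat$ to $\UcatD$. Stringing the isomorphisms above together exhibits $B$ as isomorphic to $\Gamma_N(x)$ for some 1-morphism $x$ of $\UcatD(n_1,n_2)$. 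There is no genuine obstacle in the argument; the one point worth emphasising is that the full flexibility of $\Gr$ (allowing arbitrary strictly monotone flag bimodules, not only one-step ones) combined with the divided powers in $\UcatD$ are exactly the ingredients bridged by \eqref{eq_Hk-tensor}.
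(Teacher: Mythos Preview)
Your argument is correct and follows the same outline as the paper's: both proofs invoke \eqref{eq_Hk-tensor} to reduce an arbitrary 1-morphism of $\Gr$ to a tensor product of two-term flag bimodules and then identify those with images under $\Gamma_N$.

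The one noteworthy difference is that you are more precise at the final step. The paper's proof, read literally, identifies the two-term pieces with ordinary signed-sequence bimodules $\rE_{\ep}\1_n^N$, i.e.\ tensor products of one-step factors $H_{k,k\pm 1;N}$. But \eqref{eq_Hk-tensor} produces factors $H_{k_i,k_{i+1};N}$ with arbitrary step $a=k_{i+1}-k_i$, and for $a>1$ such a bimodule is only a proper summand of $\rE^a\1_n^N$, not a direct sum of ordinary strings. You handle this cleanly by matching $H_{k,k+a;N}$ with the divided power $\rE^{(a)}\1_n^N=\Gamma_N(\cal{E}^{(a)}\onen)$ and invoking Theorem~\ref{thm_Karoubi} to ensure $\Gamma_N$ is defined on $\UcatD$. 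This is exactly the detail the paper's terse proof suppresses, so your version is a legitimate expansion of the intended argument rather than a different route.
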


\begin{proof}
It is clear that $\Gamma_N$ is surjective on objects.  Using equation~\eqref{eq_Hk-tensor} it follows that every 1-morphism in $\Gr$ is isomorphic to a direct sum of bimodules corresponding to concatenated strings $\Gamma_N(\cal{E}_{\ep}\onen) := \rE_{\ep}\1_n^N$ proving local essential surjectivity.
\end{proof}

\begin{thm} \label{thm_full}
The 2-functors $\Gamma_N \maps \UcatD \to \Gr$ are locally full.
\end{thm}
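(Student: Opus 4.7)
The plan is to use the compatibility of $\Gamma_N$ with biadjoints and with the higher $\mathfrak{sl}_2$-decomposition isomorphisms \eqref{eq_tired1}--\eqref{eq_tired} to reduce local fullness to a surjectivity statement about endomorphism algebras of divided powers.  By local essential surjectivity (Proposition~\ref{prop_local_surjective}), it suffices to check fullness on Hom-spaces of the form $\Hom(\cal{E}_{\ep}\onen, \cal{E}_{\ep'}\onenn{n'})$.  The biadjoint structure in $\UcatD$ is preserved by $\Gamma_N$ (since the adjoints in $\Gr$ are defined exactly so this holds), so there is a commutative square of isomorphisms
\[
\Hom_{\UcatD}(\cal{E}_{\ep}\onen, \cal{E}_{\ep'}\onenn{n'}) \;\cong\; \Hom_{\UcatD}(\onen, \cal{E}_{\ep^*}\cal{E}_{\ep'}\onen)
\]
with its analogue in $\Gr$, where $\ep^*$ denotes $\ep$ reversed with all signs flipped.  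Both sides vanish unless $|\ep|=|\ep'|$, so I may assume the composite sequence $\ep^*\ep'$ is balanced.

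Next I would decompose $\cal{E}_{\ep^*}\cal{E}_{\ep'}\onen$ using \eqref{eq_tired1}--\eqref{eq_tired} into a direct sum of indecomposables $\cal{E}^{(a)}\cal{F}^{(b)}\onen$ and $\cal{F}^{(b)}\cal{E}^{(a)}\onen$.  These isomorphisms exist in both $\UcatD$ and $\Gr$ and are compatibly identified by $\Gamma_N$, so fullness reduces to fullness on summands $\Hom(\onen,\cal{F}^{(b)}\cal{E}^{(a)}\onen)$ (and its dual).  Such Homs vanish unless $a=b$, and one further biadjunction yields (up to grading shift) an isomorphism
\[
\Hom(\onen,\cal{F}^{(a)}\cal{E}^{(a)}\onen) \;\cong\; \END(\cal{E}^{(a)}\onen),
\]
compatibly on the $\UcatD$ and $\Gr$ sides.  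The problem therefore collapses to showing that
\[
\Gamma_N \maps \END_{\UcatD}(\cal{E}^{(a)}\onen) \;\lra\; \END_{\Gr}(\rE^{(a)}\1_n^N)
\]
is surjective for every $a,n,N$.

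For this final surjectivity I would invoke Proposition~\ref{prop.2.4}, which identifies $\END_{\Gr}(\rE^{(a)}\1_n^N) \cong H_{k;k+a}\otimes H_{k+a;N}$ as algebras.  The first tensor factor is generated by the elementary symmetric functions $\varepsilon_1,\dots,\varepsilon_a$, which are the images under $\Gamma_N$ of the corresponding symmetric polynomials in the dots $\xi_1,\dots,\xi_a$ (these lie in $\END_{\UcatD}(\cal{E}^{(a)}\onen)$ because the thick-calculus idempotent $e_a$ factors through $\partial_{w_0}$).  The second factor is generated by the Chern-class variables $x_{j,n+2a}$ and $y_{j,n+2a}$, which $\Gamma_N$ realizes as images of suitably dotted bubbles placed in the weight $n+2a$ region to the left of the $a$-strand diagram; the infinite Grassmannian equation from \cite{Lau1} guarantees that these bubbles surject onto the polynomial generators of $H_{k+a;N}$.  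Because any endomorphism is a sum of products of these two classes of generators, $\Gamma_N$ is surjective on endomorphisms.

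The main obstacle I anticipate is careful bookkeeping of grading shifts throughout the adjunction reductions and precisely matching each dotted bubble with a monomial in the Chern-class variables through the fake-bubble mechanism, particularly in weights where both positive- and negative-degree bubbles contribute.  The underlying calculations were largely set up in \cite{Lau1} and \cite{KLMS}, so the substance of the proof is assembling them through the adjunction reductions sketched above.
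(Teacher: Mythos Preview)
Your proposal is correct and follows essentially the same route as the paper: reduce via biadjoints and the decompositions \eqref{eq_tired1}--\eqref{eq_tired} to surjectivity of $\Gamma_N$ on $\END(\cal{E}^{(a)}\onen)$, and then identify the image using Proposition~\ref{prop.2.4} together with the explicit description \eqref{eq_Ea} of $\END_{\UcatD}(\cal{E}^{(a)}\onen)$. One small caution: Proposition~\ref{prop.2.4} is stated only as an isomorphism of graded abelian groups, not of algebras, so the paper concludes the final step by a graded rank comparison (${\rm rk}_q\,\END(\rE^{(a)}\1_n^N) = {\rm rk}_q\,H_{k,k+a;N}$) rather than by naming algebra generators; your variant is fine once you observe directly that $\END_{\Gr}(\rE^{(a)}\1_n^N)$ is multiplication by $H_{k,k+a;N}$, which is generated as a ring by the $\varepsilon_j$ and the Chern classes hit by dotted bubbles.
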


\begin{proof}
We have shown that any 1-morphism in $\Gr$ can be decomposed into a direct sum of the bimodules $\rE(x)$ for $x \in \B$.  Hence it suffices to show that any 2-morphism in $\HOM_{\Gr}(\rE(x),\rE(x'))$ is in the image of $\Gamma_N$.  However, since the isomorphism in \eqref{eq_tired1}--\eqref{eq_tired} are realized through bimodule maps in the image of $\Gamma_N$, the space of maps $\HOM_{\Gr}(\rE(x),\rE(x'))$ are determined by the space of endomorphisms between 1-morphisms $\rE^{(a)}\1_n^N$ and $\rF^{(b)}\1_n^N$ since
\begin{alignat}{1} \label{5.1}
\HOM(\rE^{(a)}\rF^{(b)}\1_n^N, \rE^{(c)}\rF^{(d)}\1_n^N)=
\HOM(\rF^{(b)}\1_n^N, \rF^{(a)}\rE^{(c)}\rF^{(d)}\1_n^N)\la-a(n+2(c-d)+a)\ra\nn
\\
=\bigoplus^{\min{a,c}}_{j=0}
\bigoplus_{\qbins{a-c-n+2d}{j}}\HOM(\rF^{(b)}\1_n^N, \rE^{(c-j)}\rF^{(a-j)}
\rF^{(d)}\1_n^N)\la-a(n+2(c-d)+a)\ra \nn\\ =
\bigoplus^{\min{a,c}}_{j=0}
\bigoplus_{\qbins{b+d-n}{j}}\HOM(\rF^{(c-j)}\rF^{(b)}\1_n^N, \rF^{(a-j)}
\rF^{(d)}\1_n^N)\la(a+c-j)(b+d-j-n)\ra\, .\nn
\end{alignat}
Note that this space is zero unless $a-b=c-d$.
Finally,  observing that
$$ \rF^{(c)}\rF^{(d)}\1_n^N=\bigoplus_{\qbins{c+d}{c}}\rF^{(c+d)}\1_n^N$$
it follows that it is enough to prove fullness for
$\HOM (\rF^{(a)}\1_n^N, \rF^{(b)}\1_n^N)$ or
$\HOM (\rE^{(a)}\1_n^N, \rE^{(b)}\1_n^N)$, which are zero unless $a=b$.

It was shown by explicit computation in \cite[Proposition 8.1]{Lau1} that the $\Gamma_N$ maps $\END_{\UcatD}(\onen)$ surjectively onto $\END_{\Gr}(\1_n^{N})$ provided that 2 is invertible in the ground field $\Bbbk$.  Furthermore, from~\cite{KLMS}
\begin{equation} \label{eq_Ea}
\END_{\UcatD}(\cal{E}^{(a)}\onen) \cong \Lambda_a \otimes \Q[v_1,v_2,\dots ]
\end{equation}
where $\Lambda_a$ is the ring of symmetric functions in $a$ variables and $\Q[v_1,v_2, \dots]$ is the span of the dotted bubbles in $\END_{\UcatD}(\onen)$.   Therefore the image of $\END_{\UcatD}(\cal{E}^{(a)}\onen)$ under $\Gamma_N$ includes bimodule endomorphisms of $\rE^{(a)}\1_n^N$ given by multiplication by
products  of generators $\varepsilon_j$ and $x_{\ell,n}$  for $1\leq j \leq a$ and $1 \leq \ell \leq k$.  The space of such bimodule maps has graded rank equal to the graded rank of $H_{k,k+a;N}= \rE^{(a)}\1_n^N \la -r_a+\frac{a(a-1)}{2}\ra $.  The result then follows by a comparison of the graded ranks using Proposition~\ref{prop.2.4}
\[
 {\rm rk}_q \END (\rE^{(a)}\1_{n}^N) = {\rm rk}_q  \left(    H_{k;k+a}\otimes H_{k+a; N}\right)  =  {\rm rk}_q \left(H_{k,k+a;N} \right),
\]
where the last equality follows from the spectral sequence of a fibration $G_{k,k+a;N} \to G_{k+a;N}$, see for example \cite[Proposition 2.3]{GR}.
\end{proof}

Combining the above Theorem with certain faithfullness results from \cite{Lau1} we have the following.

\begin{cor}[Partial graded locally full and faithful] \label{cor_local_iso}
Given any two 1-morphisms
$x$ and $y$ of $\UcatD$ and  $d\in \N$, there exists
a positive integer $M$, such that the map
$$\Gamma_N: \Hom^d (x,y) \to \Hom_{\Gr \nn}^d (\Gamma_N(x), \Gamma_N(y))$$
is an isomorphism for any $N>M$.
\end{cor}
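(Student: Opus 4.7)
The plan is short because the heavy lifting has already been done. Theorem~\ref{thm_full} establishes local fullness of $\Gamma_N$, and the eventual faithfulness of $\Gamma_N$ on degree-bounded Hom spaces is one of the main results of \cite{Lau1}; the corollary is the formal combination of the two.

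First I would invoke Theorem~\ref{thm_full}: for every $N$ (large enough that $\Gamma_N(x)$ and $\Gamma_N(y)$ are nonzero) the map $\Gamma_N \maps \Hom^d_{\UcatD}(x,y) \to \Hom^d_{\Gr}(\Gamma_N(x),\Gamma_N(y))$ is surjective. Injectivity is precisely the conclusion of the faithfulness result of \cite{Lau1}, which asserts that for any fixed $x,y$ and any fixed $d$ there exists $M = M(x,y,d)$ such that $\Gamma_N$ is injective on $\Hom^d(x,y)$ for $N > M$. Combining the two gives the claim.

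To make the faithfulness step transparent, I would sketch the underlying dimension comparison. After decomposing $x$ and $y$ into indecomposable summands $\cal{E}(z)$ with $z\in\B$ (possible by Theorem~\ref{thm_Karoubi}), the biadjoint structure together with the explicit decompositions \eqref{eq_tired1}--\eqref{eq_tired} (which are realized inside the image of $\Gamma_N$) reduces the comparison of degree-$d$ Homs to the comparison of endomorphism spaces of standard divided-power 1-morphisms. On the $\UcatD$ side, by \eqref{eq_Ea}, $\END_{\UcatD}(\cal{E}^{(a)}\onen) \cong \Lambda_a \otimes \Q[v_1,v_2,\dots]$, which is finite dimensional in each fixed degree because only finitely many bubble monomials $v_{i_1}\cdots v_{i_r}$ fit below degree $d$. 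On the flag side, Proposition~\ref{prop.2.4} gives $\END_{\Gr}(\rE^{(a)}\1_n^N) \cong H_{k;k+a} \otimes H_{k+a;N}$, which is also finite dimensional in each degree. As $N$ grows, $H_{k+a;N}$ stabilizes in each bounded degree to the cohomology of the infinite Grassmannian $\Q[v_1,v_2,\dots]$, so the graded dimensions on both sides agree in degree $d$ once $N$ is sufficiently large; together with surjectivity this forces an isomorphism.

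The main obstacle is verifying that the reduction from general $(x,y)$ to standard endomorphism rings is uniform in $N$, so that a single threshold $M$ works. This is ensured because all the isomorphisms used in the reduction (divided-power splittings and the Stosic-type decompositions \eqref{eq_tired1}--\eqref{eq_tired}) are implemented by 2-morphisms lying in the image of $\Gamma_N$ and hence are compatible with the comparison at every $N$. Thus the stabilization of degree-$d$ graded dimensions for basic endomorphism rings propagates to arbitrary $\Hom^d_{\UcatD}(x,y)$, yielding the desired bound $M$.
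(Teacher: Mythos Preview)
Your proposal is correct and follows the same approach as the paper: the corollary is obtained by combining the local fullness of Theorem~\ref{thm_full} with the eventual faithfulness results from \cite{Lau1}. The paper gives no further argument beyond that one sentence, whereas you additionally sketch the dimension-comparison underlying the faithfulness step; this extra detail is sound and compatible with the paper's treatment.
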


%
\section{The inverse limit Flag 2-category}
%

%
\subsubsection{Reminders on Inverse limits}
%

An inverse system $\varprojlim f$ in a category $\cal{C}$ is a family of homomorphisms $f_{ji} \maps C_j \to C_i$ for all $i \leq j$ such that $f_{ii}=\Id_{C_i}$ and $f_{ki} = f_{ji} \circ f_{kj}$ for all $i \leq j \leq k$.  It will be convenient to describe an inverse system by a functor $F \maps \cal{D} \to \cal{C}$, where  $\cal{D}$ is the category freely generated by the graph
\begin{equation} \label{eq_inverse_diagram}
 \xy
  (-30,0)*+{\bullet}="0"+(0,3)*{\scs i+1};
 (-15,0)*+{\bullet}="1"+(0,3)*{\scs i};
 (0,0)*+{\;\;\cdots\;\;}="2";
 (15,0)*+{\bullet}="3"+(0,3)*{\scs 1};
 (30,0)*+{\bullet}="4"+(0,3)*{ \scs 0};
 (-46,0)*+{\cdots};
  {\ar (-40,0);"0"};
   {\ar^{f_{i+1,i}} "0";"1"};
  {\ar "1";"2"};
  {\ar "2";"3"};
  {\ar^{f_{1,0}} "3";"4"};
 \endxy
\end{equation}
The functor $F$ picks an object of $\cal{C}$ for each node of the graph and a 1-morphism of $\cal{C}$ for each arrow.   If $\cal{C}$ admits limits the inverse limit of the system is the subobject of the direct product
\[
\varprojlim_{i \in I} C_{i}
= \{
 \prod_{i \in I} C_i \mid C_i = f_{ji}(C_j) \quad \text{for all $i \leq j$ in $I$}
\} .
\]
This construction works for rings, modules, and bimodules.

The inverse limit $\varprojlim C_i$ is equipped with maps $\pi_i \maps \varprojlim C_i \to C_i$ satisfying $\pi_i = f_{ji} \pi_{j}$.  The limit $\varprojlim C_i$ has the universal property that for any other object $A$ with maps $\alpha_i \maps A \to C_i$ satisfying $\alpha_i = f_{ji} \alpha_j$ there exists a unique map $A\to \varprojlim C_i$ making the  diagram
\[
 \xy
  (0,30)*+{A}="tt";
  (0,15)*+{\varprojlim C_i}="t";
  (-20,0)*+{C_j}="bl";
  (20,0)*+{C_i}="br";
  {\ar_{\pi_j} "t";"bl"};
  {\ar^{\pi_i} "t";"br"};
  {\ar_{f_{ji}} "bl";"br"};
  {\ar^{} "tt";"t"};
  {\ar@/^1pc/^{\alpha_i} "tt";"br"};
  {\ar@/_1pc/_{\alpha_j} "tt";"bl"};
 \endxy
\]
commute.

To generalize the inverse limit to a higher categorical context it is useful to reformulate this definition in a more abstract language. The functor $F\maps \cal{D} \to \cal{C}$ defining the inverse system is called a $\cal{D}$-diagram in $\cal{C}$. Let $\Delta_A \maps \cal{D} \to \cal{C}$ denote the constant functor mapping all objects of $\cal{D}$ to the object $A$ of $\cal{C}$ and all morphisms of $\cal{D}$ to $1_A$ in $\cal{C}$.  The data of $A$ together with projection maps $\alpha_i \maps A \to C_i$ can all be specified by a natural transformation $\alpha \maps \Delta_A \to F$.  Such a natural transformation is called a cone over the functor $F$.  The limit of $F$ is a universal cone, that is, a pair consisting of an object $L$ of $\cal{C}$ together with a natural transformation $\pi \maps \Delta_L \to F$ giving a bijection
\begin{equation} \label{eq_def_limit}
\Hom_{\cal{K}}(A,L) \to \cat{Cone}(A, F)
\end{equation}
induced by composition with $\pi$ for all objects $A$ of $\cal{C}$.  Here $\cat{Cone}(A,F)$ denotes the set of natural transformations $\Delta_A \to F$.  The universal property implies that whenever $A$ defines a cone over $F$ there is a map $A \to L$ in $\cal{C}$ compatible with the cone structure.

%
\subsection{Inverse limits in 2-categories}
%

In this section we describe the formalism for taking the inverse limit of the diagram in \eqref{eq_inverse_diagram} where each node represents a bicategory and each arrow represents a (pseudo) 2-functor between bicategories.

Limits involving categories typically require the machinery of 2-limits in a 2-category~\cite{BKPS,kel2} because categories organize themselves into the structure of a 2-category \cat{Cat} consisting of categories, functors, and natural transformations.  The correct universal property for such limits often utilizes this 2-categorical structure giving rise to limits  being described uniquely up to equivalence rather than up to isomorphism as is typically the case.  In some instances of 2-limits it is more natural to consider cones that only commute up to 2-isomorphism.

Here we are interested in a limit involving bicategories and 2-functors between them.  This suggests that 3-categorical limits \cite{Power} will have to be employed as bicategories most naturally organize themselves into a tricategory \cat{Bicat} consisting of bicategories, 2-functors, 2-natural transformations, and modifications, see \cite{GPS,Gray}.  Fortunately, however, the full force of limits in tricategories will not need to be employed in this article.

A surprising observation made by Steve Lack is that bicategories can be organized into a bicategory by replacing 2-natural transformations by a new notion he called \emph{icons}~\cite{Lack}.  This new notion of morphism between 2-functors gives rise to a bicategory structure $\cat{iBicat}$ with objects bicategories, morphisms 2-functors, and 2-morphisms icons.  In general the notion of an icon between a pair of 2-functors is too strict for most applications of bicategories.  However, because all of the 2-functors in this article behave especially well on objects, the notion of icons is quite natural.

The advantage of working with icons is that we are able to describe the universal property of the inverse limit as a 2-limit in the bicategory \cat{iBicat} rather than working with the tricategory \cat{Bicat}.   For further reference for bicategories and 2-limits see~\cite{Lack2,StreetF} and \cite[Chapter 7]{Bor}.  For a gentle introduction to 2-categorical limits in stricter framework see \cite[Section 2]{Hoff}.

%
\subsubsection{Lack's icons} \label{sec_Lack}
%
We briefly recall some definitions from higher category theory.  For more details see \cite[Chapter 7]{Bor} or \cite{Gray}.

A {\em pseudo-functor}, or 2-functor, $F \maps \cal{K} \to \cal{K'}$ between bicategories consists of
\begin{itemize}
  \item a function from objects of $\cal{K}$ to objects of $\cal{K'}$,
  \item functors $F\maps \Hom_\cal{K}(A,B) \to \Hom_{\cal{K}'}(FA,FB)$ for all objects $A$, $B$ of $\cal{K}$,
  \item natural isomorphisms to replace the equalities in the definition of a 2-functor
\[
 F_{f,g} \maps F(g)F(f) \to F(gf), \qquad F_A \maps 1_{FA} \to F(1_A),
\]
\end{itemize}
subject to three coherence axioms for associativity and left and right unit constraints.

A {\em pseudo-natural transformation}, or 2-natural transformation,  $\alpha \maps F\To G$ consists of
\begin{itemize}
  \item for each object $A$ of $\cal{K}$, a 1-morphism $\alpha_A \maps FA \to GA$, and
  \item for each 1-morphism $f\maps A \to B$, an isomorphism
\[
 \xy
  (-10,8)*+{FA}="tl";
(-10,-8)*+{GA}="tr";
(10,8)*+{FB}="bl";
  (10,-8)*+{GB}="br";
  {\ar_{\alpha_A} "tl";"tr"};
  {\ar^{Ff} "tl";"bl"};
  {\ar_{Gf} "tr";"br"};
  {\ar^{\alpha_B} "bl";"br"};
  {\ar@{=>}_{\sim} (3,3); (-3,-3)};
 \endxy
\]
\end{itemize}
subject to three coherence conditions making the above isomorphism respect composition and identities in $\cal{K}$ and behave appropriately with respect to 2-morphisms in $\cal{K}$.

If we require that the maps $\alpha_A \maps FA \to GA$ are always identity maps then we arrive at the definition of an {\em icon}.  In particular, the 2-functors $F$ and $G$ are required to agree on objects in order to admit an icon $\alpha \maps F \To G$.

A {\em modification} between pseudo-natural transformations $\gamma \maps \alpha \Rrightarrow \beta \maps F\To G$ consists of a 2-morphism $\gamma_A \maps \alpha_A \to \beta_A$ for each object $A$ of $\cal{K}$ subject to the obvious coherence condition for 1-morphisms in $\cal{K}$.

%
\subsubsection{2-limits in bicategories}
%

Let $F \maps \cal{D} \to \cal{K}$ be a 2-functor with $\cal{D}$ a small 2-category.  For each object $A$ of $\cal{K}$ define the constant 2-functor $\Delta_A \maps \cal{D} \to \cal{K}$ sending every object $D$ of $\cal{D}$ to the object $A$, all 1-morphisms in $\cal{D}$ to the identity $1_A$, and all 2-morphisms in $\cal{D}$ to the identity 2-morphism $\Id_{1_A}$. Let $\cat{2Cone}(A,F)$ denote the category of 2-natural transformations $\Delta_A \To F$  with morphisms given by modifications.

A 2-limit (sometimes called bilimit) of $F$ is a pair $(L,\pi)$ consisting of an object $L$ of $\cal{K}$ and a 2-natural transformation $\pi \maps \Delta_L \To F$ from the constant 2-functor on $L$ to the 2-functor $F$. This pair is required to be universal in the sense that the functor
\begin{equation} \label{eq_def_2limit}
\Hom_{\cal{K}}(A,L) \to \cat{2Cone}(A, F)
\end{equation}
induced by composition with $\pi$ is an equivalence of categories for every object $A$ in $\cal{K}$.
If $(L,\pi)$ and $(L',\pi')$ are both 2-limits for $F$, then $L$ and $L'$ must be equivalent in $\cal{K}$ (see~\cite[Proposition 7.4.5]{Bor}) meaning that there exists 1-morphisms $f\maps L \to L'$ and $f'\maps L' \to L$  such that $f'f \cong 1_L$ and $ff' \cong 1_{L'}$.

%
\subsubsection{Inverse 2-limits in bicategories}
%

For inverse 2-limits we take $\cal{D}$ to be the (strict) 2-category freely generated by the graph
\[
 \xy
  (-40,0)*+{D_{i+1}}="0";
 (-18,0)*+{D_i}="1";
 (0,0)*+{\;\;\cdots\;\;}="2";
 (18,0)*+{D_1}="3";
 (36,0)*+{D_0}="4";
 (-60,0)*+{\cdots};
  {\ar (-55,0);"0"};
   {\ar^{f_{i+1,i}} "0";"1"};
  {\ar "1";"2"};
  {\ar "2";"3"};
  {\ar^{f_{1,0}} "3";"4"};
 \endxy
\]
so that $\cal{D}$ only has identity 2-morphisms.   A 2-functor $F \maps \cal{D} \to \cal{K}$ defines a $\cal{D}$-diagram in $\cal{K}$, that is a collection of objects $F(D_i)$ in $\cal{K}$ for each object $D_i$ in $\cal{D}$ and 1-morphisms $F(f_{ij}) \maps F(D_i)\to F(D_j)$ for each 1-morphism of $\cal{D}$.  When the 2-functor $F$ is strict the composite $F(f_{jk}) \circ F(f_{ij})$ equals the 1-morphism $F(f_{jk} \circ f_{ij})$.  In general the composite $F(f_{jk}) \circ F(f_{ij})$ is only isomorphic to $F(f_{jk} \circ f_{ij})$.  In this article, we will be interested in strict diagrams so that $F$ is a strict 2-functor.

A 2-natural transformation $\alpha \maps \Delta_A \To F$, called a 2-cone,  assigns to each object $D_i$ of $\cal{D}$ a morphism
\[
 \alpha_i \maps \Delta_A(D_i) = A \to F(D_i)
\]
in $\cal{K}$, and to each morphism $f_{ji}\maps D_{j} \to D_{i}$ in $\cal{D}$ an isomorphism
\[
 \xy
  (-22,8)*+{\Delta_A(D_j)=A}="tl";
(-22,-8)*+{F(D_j)}="tr";
(22,8)*+{\Delta_A(D_i)=A}="bl";
  (22,-8)*+{F(D_i)}="br";
  {\ar_-{\alpha_j} "tl";"tr"};
  {\ar^{\Delta_A(f)=1_A} "tl";"bl"};
  {\ar_{F(f_{ji})} "tr";"br"};
  {\ar^-{\alpha_i} "bl";"br"};
  {\ar@{=>}_{\sim} (3,3); (-3,-3)};
 \endxy
\]
in $\cal{K}$.   Such a 2-natural transformation is called a 2-cone because it can be organized into a diagram
\[
 \xy
 (-5,20)*+{A}="t";
  (-68,0)*+{\cdots};
  (-46,0)*+{F(D_{i+1})}="0";
 (-18,0)*+{F(D_i)}="1";
 (0,0)*+{\;\;\cdots\;\;}="2";
 (18,0)*+{F(D_1)}="3";
 (36,0)*+{F(D_0)}="4";
  {\ar (-62,0);"0"};
   {\ar_{\scs  F(f_{i+1,i})} "0";"1"};
  {\ar "1";"2"};
  {\ar "2";"3"};
  {\ar_{\scs F(f_{1,0})} "3";"4"};
  {\ar_{\alpha_{i+1}} "t";"0"};
  {\ar_{\alpha_i} "t";"1"};
  {\ar_{\alpha_1} "t";"3"};
  {\ar^{\alpha_0} "t";"4"};
 \endxy
\]
where each triangle is filled with a 2-isomorphism in $\cal{K}$. A modification $\gamma \maps \alpha \Rrightarrow \beta \maps \Delta_A \To F$ gives a morphism of 2-cones at $A$.  That is, for each object $D_i$ a 2-morphism from $\alpha_i \to \beta_i$ satisfying some natural conditions.

The 2-limit of the 2-functor $F \maps \cal{D} \to \cal{K}$ is a 2-cone $(L,\pi)$ with universal property described by \eqref{eq_def_2limit}.  In particular, given any other 2-cone $(A,\alpha)$ there exists a 1-morphism $A \to L$, unique up to 2-isomorphism in $\cal{K}$, making the diagrams
\begin{equation} \label{eq_universal_2limit}
 \xy
  (0,30)*+{A}="tt";
  (0,15)*+{L}="t";
  (-20,0)*+{F(D_{j})}="bl";
  (20,0)*+{F(D_i)}="br";
  {\ar_{\pi_{j}} "t";"bl"};
  {\ar^{\pi_{i}} "t";"br"};
  {\ar_{F(f_{ji})} "bl";"br"};
  {\ar^{} "tt";"t"};
  {\ar@/^1pc/^{\alpha_i} "tt";"br"};
  {\ar@/_1pc/_{\alpha_j} "tt";"bl"};
 \endxy
\end{equation}
commute up to 2-isomorphism for all 1-morphisms $f_{ji}$ of $\cal{D}$.

%
\subsection{The inverse Flag 2-category}
%

From here on we take $\cal{K}=\cat{iBicat}$ and construct a $\cal{D}$-diagram in \cat{iBicat}.

%
\subsubsection{An inverse system in $\cat{iBiCat}$}
%

For $p \geq 0$ set $N'=N+2p$ and regard $\Grn{N'}$ as a sub 2-category of the 2-category \cat{Bim} of graded bimodules. Consider the quotient resulting from the following maps:
\begin{align} \label{eq_Psi-ring}
 \Psi_{N',N} \maps  H_{k+p;N'} & \longrightarrow H_{k;N} \nn \\
  x_{j,n} &\mapsto
  \left\{
  \begin{array}{ll}
    x_{j,n} & \text{if $j\leq k$,}\\
    0 &\text{otherwise,}
  \end{array}   \right.\nn\\
  y_{j,n} &\mapsto
  \left\{
  \begin{array}{ll}
    y_{j,n} & \text{if $j\leq N-k$,}\\
    0 & \text{otherwise,}
  \end{array}
  \right.
\end{align}
taken for all $n=2k-N=2(k+p)-N'$ and  $0 \leq k \leq N$.  These maps are ring homomorphisms.  By base extension these homomorphisms induce homogeneous maps
\begin{align} \label{eq_Psi_bimodule}
 \Psi_{N',N} \maps  \rE^{(a)} \1_n^{N'} & \longrightarrow \rE^{(a)} \1_n^{N} \nn \\
  x_{j,n} &\mapsto
  \left\{
  \begin{array}{ll}
    x_{j,n} & \text{if $j\leq k$,}\\
    0 &\text{otherwise,}
  \end{array}   \right.\nn\\
    \varepsilon_{j} &\mapsto \varepsilon_j, \nn
  \\
  y_{j,n+2a} &\mapsto
  \left\{
  \begin{array}{ll}
    y_{j,n+2a} & \text{if $j\leq N-k-a$,}\\
    0 & \text{otherwise,}
  \end{array}
  \right.
\end{align}
sending the $(H_{k+a+p;N'},H_{k+p;N'})$-bimodule $\rE^{(a)}\1_n^{N'}$ to the $(H_{k+a;N},H_{k;N})$-bimodule $\rE^{(a)} \1_n^N$.   Base extension applied to the bimodule $\rE_{\ep}\1_n^{N'}$ corresponding to a tensor product of bimodules in $\Grn{N'}$ gives a bimodule isomorphic to the bimodule $\rE_{\ep}\1_n^N$ in $\Gr$.

Using properties of base extension for module homomorphisms gives rise to a 2-functor
\[
\Psi_{N',N} \maps \Grn{N'} \to \Gr.
\]
On 2-morphisms $\Psi_{N',N}$ sends a 2-morphism $\Gamma_{N'}(D)$ of $\Grn{N'}$ to the 2-morphism $\Gamma_N(D)$ in $\Gr$.  For every parenthesization of the tensor product $\rE_{(\ep)}\1_n^{N'}$ there is a corresponding parenthesization of the tensor product $\rE_{(\ep)}\1_n^N$ in $\Gr$, so the 2-functors $\Psi_{N',N}$ are surjective at all levels and strictly preserve composition. These 2-functors also satisfy the relation
 \begin{equation} \label{eq_compatible}
\Psi_{N'',N'} \circ \Psi_{N',N} = \Psi_{N'',N},
\end{equation}
giving rise to a pair of inverse systems
\begin{align} \label{eq_inverse_sys_flag}
 \xy
  (-40,0)*+{\Grn{N+2}}="0";
 (-18,0)*+{\Grn{N}}="1";
 (0,0)*+{\;\;\cdots\;\;}="2";
 (18,0)*+{\Grn{x+2}}="3";
 (40,0)*+{\Grn{x}}="4";
 (-60,0)*+{\cdots};
  {\ar (-55,0);"0"};
   {\ar^{\Psi_{N+2,N}} "0";"1"};
  {\ar "1";"2"};
  {\ar "2";"3"};
  {\ar^{\Psi_{x+2,x}} "3";"4"};
 \endxy
\end{align}
of bicategories and 2-functors, where $x=0$ if the subscripts $N$ of $\Gr$ are even and $x=1$ if the subscripts are odd.    In particular, we have defined a pair of strict 2-functors $\Psi \maps \cal{D} \to \cat{iBicat}$.  In the next section we show that the 2-limits of these 2-functors exist.

%
\subsubsection{Constructing 2-cones over $\Psi$}
%

Recall that the weight of the ring $H_{k;N}$ is defined as $n=2k-N$.  For any $p \geq 0$ set $N'=N+2p$.  The quotient maps \eqref{eq_Psi-ring} together with the compatibility condition \eqref{eq_compatible} imply that the weight preserving ring homomorphisms
\[
 \Psi_{N',N} \maps  H_{k+p;N'} \longrightarrow H_{k;N}
\]
satisfy $\Psi_{N'',N'} \circ \Psi_{N',N} = \Psi_{N'',N}$ giving an inverse system of ring homomorphisms. We denote the resulting inverse limit as
\[
\hat{H}_{\mathbf{n}}:=\varprojlim_{N \in \N} H_{\frac{n+N}{2};N}.
\]
These rings form the objects of the inverse limit Flag 2-category.

If we write the generators of $H_{k;N}$ as $x_{j,n}(N)$ and $y_{\ell,n}(N)$ to emphasize the dependence on $N$, applying the same conventions for when these variables are zero as in Section~\ref{eq_cohomology}, then the ring $\hat{H}_{\mathbf{n}}$ has a spanning set given by the elements
\begin{align} \label{eq_inverse_elements}
 \hat{x}_{j,n} := \prod_{N \in \N} x_{j,n}(N) \qquad \text{for $j \geq 0$,} \qquad
 \hat{y}_{\ell,n} := \prod_{N \in \N} y_{j,n}(N) \qquad\text{for $\ell \geq 0$.}
\end{align}
These elements satisfy relations arising from equating homogenous terms in $t$ in the equation
\[
 \left(1+\hat{x}_{1,n}t + \hat{x}_{2,n}t^2+ \dots+ \hat{x}_{j,n}t^j + \dots\right)
  \left(1+\hat{y}_{1,n}t + \hat{y}_{2,n}t^2+ \dots +\hat{y}_{\ell,n}t^\ell  + \dots \right) =1.
\]

We now consider Hom categories in the inverse Flag 2-category. Set $n=2k-N$ and $n'=2k'-N$ and consider the graded categories
\[
\cal{C}_N^{n,n'} := \HOM_{\Gr}(H_{k;N},H_{k';N}).
\]
The 2-functor $\Psi_{N',N}$ induces functors $ \Psi_{N',N} \maps\cal{C}_{N'}^{n,n'} \to \cal{C}_N^{n,n'}$ satisfying $\Psi_{N'',N'} \circ \Psi_{N',N} = \Psi_{N'',N}$. The inverse limit of these categories
\[
\varprojlim \cal{C}^{n,n'}:= \varprojlim_{N\in \N} \cal{C}_N^{n,n'}
\]
has as objects sequences
\[
(x_N)_{N\in \N} := \left\{ x_N =\1_{n'}^Nx\1_n^N \in \cal{C}_N^{n,n'} \mid \xymatrix@1{x_N \ar[r]^-{\sim} &\Psi_{N',N}(x_{N'})} \right\}.
\]

Morphisms in the inverse limit category $\varprojlim \cal{C}^{n,n'}$ are sequences of 2-morphisms
\[
(D_N)_{N \in \N} := \left\{ D_N \maps x_N \to x'_N\right\}_{N \in \N}
\]
 such that
\[
 \xy
  (-15,8)*+{x_N}="tl";
(-15,-8)*+{x'_N}="tr";
(15,8)*+{\Psi_{N',N}(x_{N'})}="bl";
  (15,-8)*+{\Psi_{N',N}(x'_{N'})}="br";
  {\ar_-{D_N} "tl";"tr"};
  {\ar^-{\sim} "tl";"bl"};
  {\ar_-{\sim} "tr";"br"};
  {\ar^-{\Psi_{N',N}(D_{N'})} "bl";"br"};
 \endxy
\]
commutes in $\Gr$.  The limit admits projection functors
\begin{eqnarray}
  \pi_{N'} \maps \varprojlim \cal{C}^{n,n'} &\to& \cal{C}_{N'}^{n,n'} \\
  (x_N) &\mapsto& x_{N'} \nn \\
  (D_N) &\mapsto& D_{N'} \nn
\end{eqnarray}
for all $N' \in \N$ making the diagram
\[
 \xy
  (0,15)*+{\varprojlim \cal{C}^{n,n'}}="t";
  (-15,0)*+{\cal{C}_{N'}^{n,n'}}="bl";
  (15,0)*+{\cal{C}_N^{n,n'}}="br";
  {\ar_{\pi_{N'}} "t";"bl"};
  {\ar^{\pi_{N}} "t";"br"};
  {\ar_{\Psi_{N',N}} "bl";"br"};
  {\ar@{=>}^{\sim} (3,6); (-1,2)};
 \endxy
\]
commutes up to invertible natural transformation.

%
\subsubsection{Constructing the inverse 2-limit }
%
We can now organize the inverse limits discussed above into a 2-category that we will refer to as the inverse limit Flag 2-category.  Define a composition functor
\begin{eqnarray} \label{eq_Ccomp}
 \varprojlim \cal{C}^{n',n''} \times \varprojlim \cal{C}^{n,n'} &\longrightarrow& \varprojlim \cal{C}^{n,n''}  \\
(y_N) \times (x_N)
& \mapsto& \left(y_N x_N \right)_{N \in \N} \nn\\
(D'_N) \times (D_N) &\mapsto& (D'_N.D_N) \nn
 \end{eqnarray}
for each $n,n',n''$ in $\Gr$. Above $D'_N.D_N$ denotes the horizontal composition in $\Gr$ given by tensor product of bimodule homomorphisms.   For each triple  of objects in $\Gr$ there is a natural transformation making these composition functors associative up to natural isomorphism.
\begin{defn}
The 2-category $\varprojlim \Grn{}$ is the graded additive 2-category with
\begin{itemize}
  \item objects: Objects are the rings $\hat{H}_{\mathbf{n}}$ for $n \in \Z$.
  \item Hom categories: These are defined as
$
 \HOM_{\varprojlim \Grn{}}(\hat{H}_{\mathbf{n}},\hat{H}_{\mathbf{n'}}) := \varprojlim \cal{C}^{n,n'}
$.
\end{itemize}
The composition of 1-morphisms, and horizontal composition of 2-morphisms is described by the functor from \eqref{eq_Ccomp}.
\end{defn}

By Theorem \ref{thm_full} any 2-morphism $D'$ in $\Gr$ is $\Gamma_N(D)$, for some 2-morphism $D$ in $\UcatD$, composed with associativity and unit isomorphisms. Since the inverse system defining $\varprojlim \Grn{}$ stabilized in each degree $d$,  all of the 2-morphisms in $\varprojlim \Grn{}$ are bimodule homomorphism of the form
\[
 (D_N) := \left( \Gamma_{N'}(D) \right)_{N' \in \N} \quad \text{for each 2-morphism $D$ of $\UcatD$}.
\]

Utilizing the projections from the inverse limits discussed above it is possible to define projection 2-functors $\pi_N \maps \varprojlim \Grn{} \to \Gr$ sending $\hat{H}_{\mathbf{n}}$ to $H_{k;N}$, and $\varprojlim \cal{C}^{n,n'}$ to $\cal{C}_N^{n,n'}$. These 2-functors make the diagram
\[
 \xy
  (0,15)*+{\varprojlim \Grn{}}="t";
  (-15,0)*+{\Grn{N'}}="bl";
  (15,0)*+{\Gr}="br";
  {\ar_{\pi_{N'}} "t";"bl"};
  {\ar^{\pi_{N}} "t";"br"};
  {\ar_{\Psi_{N',N}} "bl";"br"};
  {\ar@{=>}^{\sim} (3,6); (-1,2)};
 \endxy
\]
commute up to invertible 2-morphisms in \cat{iBicat}. Thus we have defined a 2-cone for the diagram $\Psi$ constructed in \eqref{eq_inverse_sys_flag}.

\subsubsection{Universal property of inverse 2-limit}
%

Using the universal properties of the various inverse limits involved in the construction of $\varprojlim \Grn{}$ we show this 2-category has the universal property defining the inverse 2-limit. Given any other 2-cone $(\cal{K},\alpha)$
\[
 \xy
  (0,15)*+{\cal{K}}="t";
  (-15,0)*+{\Grn{N+2}}="bl";
  (15,0)*+{\Gr}="br";
  {\ar_{\alpha_{N+2}} "t";"bl"};
  {\ar^{\alpha_{N}} "t";"br"};
  {\ar_{\Psi_{N+2,N}} "bl";"br"};
  {\ar@{=>}^{\sim} (3,6); (-1,2)};
 \endxy
\]
in \cat{iBicat}, define a 2-functor $\hat{\alpha} \maps \cal{K} \to \varprojlim \Grn{}$ as follows:
\begin{itemize}
  \item On objects $\hat{\alpha}$ maps an object $A$ of $\cal{K}$ to the object $\prod_{N}\alpha_N(A)$.

  \item If $f \maps A \to B$ in $\cal{K}$, then $\hat{\alpha}(f) := \left(\alpha_N(f)\right)_{N \in \N}$.

  \item If $T \maps f \To g$ is a 2-morphism in $\cal{K}$, then $\hat{\alpha}(T) := \left( \alpha_N(T)\right)_{N \in \N}$.
\end{itemize}
This 2-functor satisfies the universal property described in \eqref{eq_universal_2limit}.

%
\subsection{$\UcatD$ as an inverse 2-limit of Flag 2-categories}
%

The 2-category $\UcatD$ is equipped with 2-functors $\Gamma_N \maps \UcatD \to \Gr$ for all $N$.  These 2-functors define a 2-cone
\[
 \xy
  (0,15)*+{\UcatD}="t";
  (-15,0)*+{\Grn{N+2}}="bl";
  (15,0)*+{\Gr}="br";
  {\ar_{\Gamma_{N+2}} "t";"bl"};
  {\ar^{\Gamma_{N}} "t";"br"};
  {\ar_{\Psi_{N+2,N}} "bl";"br"};
 \endxy
\]
where the triangles commute up to 2-isomorphism in \cat{iBicat}.  Hence, by the universal property of the inverse 2-limit there exists a 2-functor $\hat{\Gamma} \maps \UcatD \to \varprojlim \Grn{}$, unique up to 2-isomorphism in \cat{iBicat}, making the diagram
\[
 \xy
  (0,30)*+{\UcatD}="tt";
  (0,15)*+{\varprojlim \Grn{}}="t";
  (-20,0)*+{\Grn{N+2}}="bl";
  (20,0)*+{\Gr}="br";
  {\ar_{\pi_{N+2}} "t";"bl"};
  {\ar^{\pi_{N}} "t";"br"};
  {\ar_{\Psi_{N+2,N}} "bl";"br"};
  {\ar^{\hat{\Gamma}} "tt";"t"};
  {\ar@/^1pc/^{\Gamma_N} "tt";"br"};
  {\ar@/_1pc/_{\Gamma_{N+2}} "tt";"bl"};
 \endxy
\]
commute up to 2-isomorphism in \cat{iBicat}.

%

\begin{thm} \label{thm_equiv}
The 2-functor $\hat{\Gamma} \maps \UcatD \to \varprojlim \Grn{}$ is an equivalence of 2-categories in \cat{iBicat}.
\end{thm}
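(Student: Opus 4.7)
The plan is to verify the three conditions characterizing an equivalence in the bicategory \cat{iBicat}: that $\hat{\Gamma}$ is a bijection on objects (since icons require source and target 2-functors to agree strictly on objects, any iconic inverse must strictly invert $\hat{\Gamma}$ on objects), locally fully faithful, and locally essentially surjective. The bijection on objects is immediate from the construction: both $\UcatD$ and $\varprojlim\Grn{}$ are indexed by $n \in \Z$, with $\hat{\Gamma}(n)=\hat{H}_{\mathbf{n}}$.

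Local full faithfulness is a direct consequence of Corollary~\ref{cor_local_iso}. A degree-$d$ 2-morphism in $\Hom^d_{\varprojlim\Grn{}}(\hat{\Gamma}(x),\hat{\Gamma}(y))$ is by construction a compatible sequence $(D_N)_{N\in\N}$ under the maps $\Psi_{N',N}$. Choose $M$ large enough (depending on $x$, $y$, $d$) so that $\Gamma_N$ is an isomorphism on degree-$d$ Homs for all $N > M$. Each such $D_N$ lifts to a unique $\tilde D \in \Hom^d_{\UcatD}(x,y)$, and these lifts agree across $N>M$ because $\Psi_{N',N}\circ\Gamma_{N'}=\Gamma_N$. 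Compatibility of the remaining $D_N$ for $N \leq M$ then forces $D_N = \Gamma_N(\tilde D)$ for all $N$. Hence $\hat{\Gamma}$ induces an isomorphism on each graded Hom space.

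Local essential surjectivity is the main substantive step. Given a 1-morphism $(x_N)$ in $\HOM_{\varprojlim\Grn{}}(\hat{H}_{\mathbf{n}}, \hat{H}_{\mathbf{n'}})$, the Krull-Schmidt property established in Theorem~\ref{thm_Karoubi} yields, for each $N$, a unique (up to reordering) decomposition $x_N \cong \bigoplus_{y \in \B} (\rE(y)\1_n^N)^{\oplus f_y(N)}$ with multiplicities $f_y(N) \in \N[q,q^{-1}]$. The structural isomorphism $x_N \cong \Psi_{N',N}(x_{N'})$, combined with $\Psi_{N',N}(\rE(y)\1_n^{N'}) \cong \rE(y)\1_n^N$ whenever the target is nonzero, together with uniqueness of Krull-Schmidt, forces the polynomials $f_y(N)$ to stabilize to common values $f_y$ for $N$ sufficiently large. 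Setting $z := \bigoplus_{y \in \B} \cal{E}(y)\onen^{\oplus f_y}$ in $\UcatD$ then produces a 1-morphism with $\hat{\Gamma}(z) \cong (x_N)$ in $\varprojlim\Grn{}$.

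The main obstacle is the stabilization argument in the essential surjectivity step. The key observation is that the splitting idempotents producing the indecomposable summands $\rE(y)\1_n^N$ are, by the proofs in Section~2, images under $\Gamma_N$ of explicit nilHecke-theoretic idempotents in $\UcatD$, and the 2-functors $\Psi_{N',N}$ are compatible with the $\Gamma_N$ by construction. Thus these idempotents and their splittings transport coherently along the inverse system, so the Krull-Schmidt data of $x_N$ is genuinely controlled by that of $x_{N'}$ for all $N' > N$. Comparing graded dimensions of the indecomposables using Proposition~\ref{prop.2.4} across the inverse system then establishes stabilization. Finally, assembling the local isomorphisms $\hat{\Gamma}(z)_N \cong x_N$ into a 2-morphism of $\varprojlim\Grn{}$ is a naturality check that follows from the canonicity of the Krull-Schmidt decomposition.
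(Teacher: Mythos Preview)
Your overall outline and your treatment of local full faithfulness via Corollary~\ref{cor_local_iso} match the paper's proof. For local essential surjectivity the paper is much terser than you are: it simply cites Proposition~\ref{prop_local_surjective} and relies on the concrete description of 1-morphisms in $\varprojlim\Grn{}$ given just before the theorem, whereas you supply a Krull--Schmidt stabilization argument.

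There is, however, a gap in your essential surjectivity step. You correctly show that each individual multiplicity $f_y(N)$ stabilizes to a value $f_y$, and then set $z=\bigoplus_{y\in\B}\cal{E}(y)\onen^{\oplus f_y}$. But $\UcatD$ admits only \emph{finite} direct sums, so for $z$ to exist you must also verify that only finitely many of the limiting $f_y$ are nonzero, and nothing in your stabilization argument delivers this. Concretely, the sequence $x_N=\bigoplus_{a=0}^{N/2}\rE^{(a)}\rF^{(a)}\1_0^N$ (with $n=0$, $N$ even) satisfies all your compatibility hypotheses: each $x_N$ is a finite sum in $\Grn{N}$, the summands with $a>N/2$ vanish under $\Psi_{N+2,N}$ so $\Psi_{N+2,N}(x_{N+2})\cong x_N$, and every multiplicity $f_{E^{(a)}F^{(a)}1_0}(N)$ stabilizes to $1$ --- yet no finite 1-morphism of $\UcatD$ has this sequence as its image under $\hat{\Gamma}$. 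Either the paper's description of objects in $\varprojlim\cal{C}^{n,n'}$ is meant to exclude such unboundedly growing sequences (this is consistent with its one-line appeal to Proposition~\ref{prop_local_surjective}), or an additional finiteness argument is needed; in either case you should address this point explicitly rather than absorb it into the phrase ``stabilize to common values.''
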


\begin{proof}
The 2-functor $\hat{\Gamma}$ is a bijection on objects.  This 2-functor is locally essentially surjective and locally full as a consequence of Proposition~\ref{prop_local_surjective}.  From the description of 2-morphisms in $\varprojlim \Grn{}$ Corollary~\ref{cor_local_iso} implies that $\hat{\Gamma}$ is locally full and faithful. The result follows.
\end{proof}

\begin{cor}
The 2-category $\UcatD$ is the unique 2-category up to equivalence in $\cat{iBicat}$ that is equipped with 2-representations to $\Gr$ for all $N$ that commute with the projection 2-functors $\Psi_{N',N}$.
\end{cor}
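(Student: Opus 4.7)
The plan is to derive this corollary as a formal consequence of Theorem~\ref{thm_equiv} combined with the standard uniqueness of 2-limits in a bicategory, which the paper already cites as \cite[Proposition 7.4.5]{Bor}. No further computation is required; the whole content of the corollary is the transfer of the universal property from $\varprojlim \Grn{}$ to $\UcatD$ along the equivalence $\hat\Gamma$.

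First I would unpack what the hypothesis "a 2-category equipped with 2-representations to $\Gr$ for all $N$ that commute with the projection 2-functors $\Psi_{N',N}$" means in the language already developed: it is exactly the datum of a 2-cone $(\cal{K},\alpha)$ over the diagram $\Psi \maps \cal{D}\to\cat{iBicat}$ defined in \eqref{eq_inverse_sys_flag}. The corollary asserts that $\UcatD$ is universal among such 2-cones, i.e.\ that $(\UcatD,\Gamma)$ is itself a 2-limit of $\Psi$ in \cat{iBicat}. Given this, uniqueness up to equivalence of limits implies that any other universal such 2-cone $(\cal{K},\alpha)$ is related to $\UcatD$ by an equivalence in \cat{iBicat}.

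To promote $(\UcatD,\Gamma)$ to a 2-limit I would invoke Theorem~\ref{thm_equiv}, which shows that $\hat\Gamma\maps \UcatD \to \varprojlim \Grn{}$ is an equivalence in \cat{iBicat}, together with the triangle displayed just before Theorem~\ref{thm_equiv}, which factors each $\Gamma_N$ through $\hat\Gamma$ and the projection $\pi_N$ from the limit. Since $(\varprojlim \Grn{},\pi)$ is a 2-limit by construction (its universal property was verified immediately above Theorem~\ref{thm_equiv}), and since equivalences in a bicategory transport universal 2-cones to universal 2-cones, precomposition with the 2-cone $(\Gamma_N)_{N\in\N}$ induces an equivalence of categories
\[
\Hom_{\cat{iBicat}}(\cal{K},\UcatD)\;\xrightarrow{\sim}\;\cat{2Cone}(\cal{K},\Psi)
\]
for every bicategory $\cal{K}$. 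Thus $(\UcatD,\Gamma)$ satisfies \eqref{eq_def_2limit}.

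Finally, the uniqueness claim follows from the general fact that any two 2-limits of the same diagram in a bicategory are equivalent \cite[Proposition 7.4.5]{Bor}: given another universal 2-cone $(\cal{K},\alpha)$, the universal property of $(\UcatD,\Gamma)$ yields a 2-functor $\hat\alpha\maps \cal{K}\to \UcatD$ (unique up to 2-isomorphism) compatible with the cone structures, and the universal property of $(\cal{K},\alpha)$ yields a quasi-inverse, producing an equivalence $\cal{K}\simeq \UcatD$ in \cat{iBicat}. I do not anticipate a genuine obstacle; the only conceptual subtlety is making sure the diagrams commute up to 2-isomorphism and not on the nose, which is handled automatically by working in the bicategory \cat{iBicat} rather than the 2-category of strict 2-functors.
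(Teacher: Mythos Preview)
Your proposal is correct and matches the paper's intent: the corollary is stated there without proof, immediately after Theorem~\ref{thm_equiv}, precisely because it follows formally from that theorem together with the uniqueness of 2-limits in a bicategory \cite[Proposition~7.4.5]{Bor}. Your unpacking of the hypothesis as a 2-cone over $\Psi$ and the transport of the universal property along the equivalence $\hat\Gamma$ is exactly the argument the authors leave implicit.
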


%
\section{Applications}
%

%
\subsection{The Image of the Casimir}
%
In this section we adopt shift convention from \cite{BKL} and set
\[
 \Hom(x \la s \ra, y \la t \ra ) =  \Hom^{s-t}(x,y).
\]

In \cite{BKL} the Casimir complex was defined as follows:
\begin{align}
 \cal{C}\onenn{n} &:=
  \xy
  (-50,0)*+{\left(\begin{array}{c}
      \scs  \cal{F}\cal{E} \onen \la2\ra\\ \scs \onen \la1+n\ra
          \end{array}\right) }="1";
  (0,0)*+{\und{\left(\begin{array}{c}
      \scs  \cal{F}\cal{E} \onen\\ \scs \cal{F}\cal{E} \onen
    \end{array}\right)} }="3";
  (60,0)*+{\left(\begin{array}{c}
      \scs \cal{F}\cal{E} \onen \la-2\ra\\ \scs \onen \la -n-1\ra
    \end{array}\right) }="5";
   {\ar^-{
  \left(
    \begin{array}{cc}
      \text{$\Udowndot\Uup$} & \Ucupr \\ & \\
      \text{$\Udown\Uupdot $} & \Ucupr \\
    \end{array}
  \right)
   } "1";"3"};
   {\ar^-{
  \left(
    \begin{array}{cc}
      -\;\Udown\Uupdot  &  \text{$\Udowndot\Uup $}  \\ & \\
      \Ucapl & -\;\Ucapl\\
    \end{array}
  \right)
   } "3";"5"};
 \endxy \nn \\
 \label{eq_casimir}
\end{align}
where we underlined the term  in zero homological degree.  It was also shown that this complex categorifies an integral idempotented version of the Casimir element for $\U$.

Let us denote by $B_{k;N}$ the following $(H_{k;N},H_{k;N})$-bimodule
  $$B_{k;N}:=H_{k,k+1;N}\otimes_{H_{k+1;N}} H_{k+1,k;N}\, .$$
Using the definition of the 2-functor $\Gamma_N$,
we easily compute
\begin{align}
\Gamma_N( \cal{C}\onenn{n}) &:=
  \xy
  (-50,0)*+{\left(\begin{array}{c}
      \scs  B_{k;N}  \la 3-N\ra\\ \scs H_{k;N} \la1+n\ra
          \end{array}\right) }="1";
  (0,0)*+{\und{\left(\begin{array}{c}
      \scs  B_{k;N} \la1-N\ra\\\scs B_{k;N}\la1-N\ra
    \end{array}\right)} }="3";
  (60,0)*+{\left(\begin{array}{c}
      \scs B_{k;N} \la-1-N\ra\\ \scs H_{k;N} \la -n-1\ra
    \end{array}\right) }="5";
   {\ar^-{
\Gamma_N  \left(
    \begin{array}{cc}
      \text{$\Udowndot\Uup$} & \Ucupr \\ & \\
      \text{$\Udown\Uupdot $} & \Ucupr \\
    \end{array}
  \right)
   } "1";"3"};
   {\ar^-{ \Gamma_N
  \left(
    \begin{array}{cc}
      -\;\Udown\Uupdot  &  \text{$\Udowndot\Uup $}  \\ & \\
      \Ucapl & -\;\Ucapl\\
    \end{array}
  \right)
   } "3";"5"};
 \endxy \nn \\
 \label{eq_casimir}
\end{align}

Let us define a simpler complex
$$K:= H_{k;N}\,\la1+N\ra\,[-1] \oplus H_{k;N}\,\la-1-N\ra\,[1] ~, $$
where $[a]$ denotes the shift in the  homological degree by $a$. This complex
has zero differentials, and two copies of shifted $\Gamma_N(\onen)$  in
homological degrees $-1$ and $1$. Note that the shift depends only
on $N$, but not
on $n$.

\begin{thm} \label{thm_cas} The complex $K$ is
quasi-isomorphic to $\Gamma_N(\cal C\onen)$.
\end{thm}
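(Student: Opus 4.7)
The plan is to compute the cohomology of $\Gamma_N(\mathcal{C}\mathbf{1}_n)$ directly and show it agrees with that of $K$: namely $H_{k;N}\langle 1+N\rangle$ in homological degree $-1$, zero in degree $0$, and $H_{k;N}\langle -1-N\rangle$ in degree $+1$.

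First I would simplify the differentials. Write $\alpha = \Udowndot\Uup$, $\beta = \Udown\Uupdot$, $\gamma = \Ucupr$, and $\delta = \Ucapl$. The identity $d^0 d^{-1} = 0$ forces $\alpha\beta = \beta\alpha$ together with $\alpha\gamma = \beta\gamma$ and $\delta\alpha = \delta\beta$, the last two expressing that dots slide through cups and caps via cyclicity of the biadjoint structure. The change of basis $(c_1,c_2)\mapsto (c_1-c_2,c_2)$ on the middle term $B_{k;N}\langle 1-N\rangle^{\oplus 2}$ then makes the differentials block-triangular:
\begin{equation*}
d^{-1} = \begin{pmatrix} \alpha - \beta & 0 \\ \beta & \gamma \end{pmatrix}, \qquad d^0 = \begin{pmatrix} -\beta & \alpha - \beta \\ \delta & 0 \end{pmatrix}.
\end{equation*}
This disentangles the cup/cap contributions from the difference-of-dots action on $B_{k;N}$.

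Next I would decompose $B_{k;N}\langle 1-N\rangle = \Gamma_N(\mathcal{F}\mathcal{E}\mathbf{1}_n)$ using the image under $\Gamma_N$ of the $\mathfrak{sl}_2$-isomorphism \eqref{eq_tired} (or \eqref{eq_tired2}, depending on the sign of $n$) with $a=b=1$, which splits $B_{k;N}\langle 1-N\rangle$ as the corresponding $\mathcal{E}\mathcal{F}$-bimodule $B'_{k;N}\langle 1-N\rangle$ plus shifted copies of $H_{k;N}$. Under this splitting, $\alpha-\beta$ (multiplication by the difference of the two ``middle'' generators of $B_{k;N}$) acts by explicit morphisms between the summands, while the cup $\gamma$ and cap $\delta$ factor through specific extremal $H_{k;N}$-summands. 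Computing the induced maps, the block-triangular complex decomposes as an acyclic subcomplex, which pairs $B'_{k;N}$ and all intermediate $H_{k;N}$-summands against each other via $\alpha-\beta$, $\beta$, plus a surviving two-term complex isomorphic to $K$. The shift $\pm(N+1)$, rather than the naive $\pm(n+1)$ from the cup/cap, arises by combining the cup's shift with the degree of the extremal nonzero bubble $\smccbub{N}$ (or $\smcbub{N}$) in $\END(\mathbf{1}_n^N)$, reflecting the classical fact that the Casimir acts as $q^{N+1}+q^{-N-1}$ on the $(N+1)$-dimensional irreducible representation. Vanishing of $H^0$ then follows from the Grothendieck-level equality $[\Gamma_N(\mathcal{C}\mathbf{1}_n)] = [K]$ combined with the Krull-Schmidt property established earlier.

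The main obstacle is the precise bookkeeping of the $\mathfrak{sl}_2$-decomposition: one must verify that $\gamma$ lands in the specific extremal $H_{k;N}$-summand whose shift survives as $1+N$, that $\delta$ projects from the corresponding summand with shift $-1-N$, and that $\alpha-\beta$ (together with $\beta$) supplies morphisms cancelling all remaining intermediate $H_{k;N}$-summands and all of $B'_{k;N}$ in pairs. The explicit formulas for the inverses in \eqref{eq_tired}, given in terms of crossings and dotted cups/caps, make this tractable but require careful tracking of shifts through $\Gamma_N$.
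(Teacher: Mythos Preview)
Your approach differs substantially from the paper's. Rather than simplifying the complex via a change of basis and the $\mathfrak{sl}_2$-decomposition of $B_{k;N}$, the paper constructs an explicit chain map $f\colon K \to \Gamma_N(\mathcal{C}\mathbf{1}_n)$ and verifies directly that it induces isomorphisms on homology. The key tool is the local fullness of $\Gamma_N$ (Theorem~\ref{thm_full}): to identify $\ker d^{-1}$, the paper exhibits a specific generator $Y$ (built from $\Gamma_N(\Ucupr)(1)$ and the top Chern class $y_{N-k,n}$) and argues that any other kernel element must be an $H_{k;N}$-multiple of $Y$, because by fullness every bimodule map $H_{k;N}\to B_{k;N}$ is $\Gamma_N(\Ucupr)$ composed with multiplication by generators. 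The same fullness argument handles $H^0=0$. So the paper never decomposes $B_{k;N}$ at all; it leverages the paper's main structural theorem instead of the combinatorics of \eqref{eq_tired1}--\eqref{eq_tired}.

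Your strategy is reasonable and, if it goes through, would actually prove a homotopy equivalence in $\Gr$, which is stronger than the quasi-isomorphism claimed. But the proposal halts exactly at the hard step: you identify the interaction of $\alpha-\beta$, $\gamma$, $\delta$ with the summands of the $\mathfrak{sl}_2$-decomposition as ``the main obstacle'' and do not carry it out. That computation is the entire content of the argument in your approach, so as written this is a plan rather than a proof.

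There is also a genuine gap in your fallback for $H^0=0$. The homology $H^0$ is a subquotient in the abelian category of graded $(H_{k;N},H_{k;N})$-bimodules; it is not a priori a 1-morphism of $\Gr$, and the Krull--Schmidt statement of Theorem~\ref{thm_Karoubi} applies only to 1-morphisms of $\Gr$. Thus $[H^0]=0$ in $K_0$ does not by itself force $H^0=0$. If your Gaussian elimination succeeds this step is unnecessary, but it is not a valid independent check.
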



\begin{proof}
In the case when $n=N$, the claim is obvious, since $\cal{E}1_{N}=0$ and $B_{N,N}=0$.

For general $n=2k-N$, let us first define a chain map $f: K \to \Gamma_N(\cal C\onen)$
as follows.
\begin{align*} f_{-1}:&\; H_{k;N}\la 1+N\ra &\to &\left(\begin{array}{c}
       \scs B_{k;N}  \la 3-N\ra\\\scs  H_{k;N} \la1+n\ra
          \end{array}\right)&\; \quad f_0=0\quad\quad &f_1:& H_{k;N}\la -1-N\ra &\to& \left(\begin{array}{c}
      \scs  B_{k;N}  \la -1-N\ra\\ \scs H_{k;N} \la1+n\ra
          \end{array}\right)\\
& 1&\mapsto&
 \quad Y&&& 1&\mapsto& \left(\begin{array}{c}
     \scs 1\otimes 1\\ \scs 0
          \end{array}\right)
\end{align*}
where
\begin{equation} \label{eq_ker}
Y:={\left(\begin{array}{c}
(1 \otimes y_{N-k-1, n+2}) \Gamma_N\left(\text{$\Ucupr$}\right)(1)
 \\ \\  -y_{N-k,n}
    \end{array}\right) }\, .
\end{equation}
Here  the first entry of $Y$ is an element of
 $H_{k,k+1;N}\otimes H_{k+1,k;N} \la 3-N\ra$
 which is constructed as follows:
we take the image of $1\in H_{k;N}$ under the degree $2k$ bimodule map
$\Gamma_N \left(\text{$\Ucupr$}  \right)$ (given by  eq.(7.6) in \cite{Lau1})
and then multiply  the second tensor factor by $y_{N-k-1,n+2}$.
It is easy to check that the total degree of the map $f$ is zero.

Let us check that this map induces an isomorphism on homology.

For
\[
A:= \Gamma_N \left(
    \begin{array}{cc}
      \text{$\Udowndot\Uup$} & \Ucupr \\ & \\
      \text{$\Udown\Uupdot $} & \Ucupr \\
    \end{array}
  \right)
\]
we claim that the kernel of $A$ as an $(H_{k;N},H_{k;N})$-bimodule
is generated by $Y$ and hence, the homology of
$\Gamma_N(\cal C\onen)$ in degree $-1$ is isomorphic to
$H_{k;N}\la N+1\ra$.

Let us justify the claim.
Sliding $y_{N-k,n}$ from the $n$th to the $(n+2)$th region by using
 relation (6.19) in \cite{Lau1},
we get the following identities
\begin{align}\label{slid}
\Gamma_N \left(\Ucupr\right)( y_{N-k,n}) &=
  (1\otimes \xi \, y_{N-k-1, n+2}) \Gamma_N\left( \Ucupr \right)( 1) \\ \nn
&=
(\xi\otimes y_{N-k-1, n+2})\Gamma_N\left( \Ucupr \right)( 1)
\end{align}
since $y_{N-k,n+2}=0$. Thus, $AY=0$.
It remains to show that  any other term in the kernel is of the form
$x_{i,n} Y$ for  $0\leq i\leq k$.
Assume there exists
$Z=(z_1, z_2)^t$ such that $AZ=0$, then we have
\[\Gamma_N \left(\Udowndot\Uup\right) ( z_1)=
\Gamma_N\left(\Udown\Uupdot\right) ( z_1)= -\Gamma_N \left(\Ucupr\right)
(z_2).
\]
By  fullness
of $\Gamma_N$, $z_1$ has to be of the form $\Gamma_N(\Ucupr)$ up to
multiplication with $\xi$ and $x_{i,n}$'s. Using \eqref{slid}, we get the claim.

Analogously, using fullness,
it is easy to check that under $\Gamma_N$ the kernel
of the map given by the second matrix in \eqref{eq_casimir}
is contained in the image
of the first one. Hence, the homology in degree $0$ vanishes.

Finally, observe that the map
$$\Gamma_N (\Ucapl): B_{k;N}\la1-N\ra \to H_{k;N}\la-1-n\ra$$
defined by eq. (7.8) in \cite{Lau1} is surjective. We conclude
that also $f_{1}$ induces an isomorphism on homology.

\end{proof}

%

\end{document}